\newtheorem{theorem}{Theorem}
\newtheorem{corollary}{Corollary}
\newtheorem{definition}{Definition}
\newtheorem{lemma}{Lemma}
\newtheorem{proposition}{Proposition}
\newtheorem{remark}{Remark}
\newcommand\demo{\xqed{$\square$}}
\renewenvironment{proof}{{\bfseries \noindent Proof.}}{\demo}
\newcommand\xqed[1]{
\leavevmode\unskip\penalty9999 \hbox{}\nobreak\hfill
\quad\hbox{#1}}
\def\RN{\mathbb{R}^N}
\def\LL{\mathcal{L}}
\def\LL{\mathcal L}
\begin{document}
\title{Semilinear Equations Including the Mixed Operator}
\author{Alaa Ayoub\\ \href{alaayoub94@gmail.com}{alaayoub94@gmail.com}}
\maketitle
\tableofcontents
  \begin{abstract}
We study the local and global existence of solutions to a semilinear evolution equation driven by a mixed local-nonlocal operator of the form \( L = -\Delta + (-\Delta)^{\alpha/2} \), where \( 0 < \alpha < 2 \). The Cauchy problem under consideration is  
\begin{equation*}
\partial_t u + t^\beta L u = -h(t) u^p, \quad x \in \mathbb{R}^N, \quad t > 0,
\end{equation*}
with nonnegative initial data \( u(x, 0) = u_0(x) \). We establish the existence and uniqueness of local solutions in \( L^\infty(\mathbb{R}^N) \) using a contraction mapping argument. Furthermore, we analyze conditions for global existence, proving that solutions remain globally bounded in time under appropriate assumptions on the parameters \( \beta \), \( p \), and the function \( h(t) \).
\end{abstract}
\section{Preliminaries}\label{sec2}
Consider the problem
\begin{equation}\label{eq}
\left\{\begin{array}{ll}\partial_t u+t^{\beta}\mathcal{L} u=-h(t)u^p,&\qquad x\in\mathbb{R}^N,\,t>0,\\\\
 u(x,0)= u_0(x)\geq0,&\qquad x\in\mathbb{R}^N,\\
 \end{array}\right.
\end{equation}
where $p>1$, $\beta\geq0$, $h:(0,\infty)\to(0,\infty)$,  $h\in L^1_{loc}(0,\infty)$, and
$$u_0\in L^1(\mathbb{R}^N)\cap C_0(\mathbb{R}^N).$$
We first recall the definition and some properties related to the fractional Laplacian needed to prove our Theorem.
\begin{definition}\cite{Silvestre}\label{def1}
Let $u\in\mathcal{S}$  be the Schwartz space of rapidly decaying $C^\infty$ functions in $\mathbb{R}^N$ and $s \in (0,1)$. The fractional Laplacian $(-\Delta)^s$ in $\mathbb{R}^N$ is a non-local operator given by
\begin{eqnarray*}
 (-\Delta)^s v(x)&:=& C_{N,s}\,\, p.v.\int_{\mathbb{R}^N}\frac{v(x)- v(y)}{|x-y|^{N+2s}}\,dy\\\\
&=&\left\{\begin{array}{ll}
\displaystyle C_{N,s}\,\int_{\mathbb{R}^N}\frac{v(x)- v(y)}{|x-y|^{N+2s}}\,dy,&\quad\hbox{if}\,\,0<s<1/2,\\
{}\\
\displaystyle C_{N,s}\,\int_{\mathbb{R}^N}\frac{v(x)- v(y)-\nabla v(x)\cdotp(x-y)\mathcal{X}_{|x-y|<\delta}(y)}{|x-y|^{N+2s}}\,dy,\quad\forall\,\delta>0,&\quad\hbox{if}\,\,1/2\leq s<1,\\
\end{array}
\right.
\end{eqnarray*}
where $p.v.$ stands for Cauchy's principal value, and $C_{N,s}:= \frac{s\,4^s \Gamma(\frac{N}{2}+s)}{\pi^{\frac{N}{2}}\Gamma(1-s)}$.
\end{definition}

We are rarely going to use the fractional Laplacian operator in the Schwartz space; it can be extended to less regular functions as follows. For $s \in (0,1)$, $\varepsilon>0$, let
\begin{eqnarray*}
L_{s,\varepsilon}(\Omega)&:=&\left\{\begin{array}{ll}
\displaystyle L_s(\mathbb{R}^N)\cap C^{0,2s+\varepsilon}(\Omega)&\quad\hbox{if}\,\,0<s<1/2,\\
{}\\
\displaystyle L_s(\mathbb{R}^N)\cap C^{1,2s+\varepsilon-1}(\Omega),&\quad\hbox{if}\,\,1/2\leq s<1,\\
\end{array}
\right.
\end{eqnarray*}
where $\Omega$ is an open subset of $\mathbb{R}^N$, $C^{0,2s+\varepsilon}(\Omega)$ is the space of $2s+\varepsilon$- H\"{o}lder continuous functions on $\Omega$,  $C^{1,2s+\varepsilon-1}(\Omega)$ the space of functions of $C^1(\Omega)$ whose first partial
derivatives are H\"{o}lder continuous with exponent $2s+\varepsilon-1$, and
$$L_s(\mathbb{R}^N)=\left\{u:\mathbb{R}^N\rightarrow\mathbb{R}\quad\hbox{such that}\quad \int_{\mathbb{R}^N}\frac{u(x)}{1+|x|^{N+2s}}\,dx<\infty\right\}.$$

\begin{proposition}\label{Frac}\cite[Proposition~2.4]{Silvestre}${}$\\
Let $\Omega$ be an open subset of $\mathbb{R}^N$, $s \in (0,1)$, and $f\in L_{s,\varepsilon}(\Omega)$ for some $\varepsilon>0$. Then $(-\Delta)^sf$ is a continuous function in $\Omega$ and $(-\Delta)^sf(x)$ is given by the pointwise formulas of Definition \ref{def1} for every $x\in\Omega$.
\end{proposition}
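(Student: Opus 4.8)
The plan is to establish both assertions---pointwise validity of the formulas in Definition \ref{def1} at every $x \in \Omega$, and continuity of $(-\Delta)^s f$ on $\Omega$---by a single localization strategy, splitting the defining integral into a piece near the singularity $y = x$ and a piece away from it, and treating the two Hölder regimes in parallel.

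First I would fix $x \in \Omega$ and a small radius $r > 0$ with $\overline{B_r(x)} \subset \Omega$, and (in the case $1/2 \leq s < 1$) choose the free parameter $\delta = r$, so that the gradient correction is active precisely on the inner ball $\{|z| < r\}$. After the substitution $z = x - y$ I would split the integral over $\{|z| < r\}$ and $\{|z| \geq r\}$. On the inner ball the decisive point is that the numerator is controlled by $|z|^{2s+\varepsilon}$: when $0 < s < 1/2$ this is immediate from $f \in C^{0,2s+\varepsilon}(\Omega)$, giving $|f(x) - f(x-z)| \leq [f]_{C^{0,2s+\varepsilon}} |z|^{2s+\varepsilon}$; when $1/2 \leq s < 1$ I would write $f(x) - f(x-z) - \nabla f(x)\cdot z = \int_0^1 (\nabla f(x-tz) - \nabla f(x))\cdot z \, dt$ and invoke $f \in C^{1,2s+\varepsilon-1}(\Omega)$ to bound this by $[\nabla f]_{C^{0,2s+\varepsilon-1}} |z|^{2s+\varepsilon}$. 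In both cases the inner integrand is dominated by $|z|^{-(N-\varepsilon)}$, which is integrable near the origin; this shows at once that the principal value reduces to an absolutely convergent integral and that the inner contribution is bounded by $C r^{\varepsilon}$ uniformly for $x$ in any compact $K \subset \Omega$. On the outer region the kernel is nonsingular, and I would control the integral through $f \in L_s(\mathbb{R}^N)$: using the comparison $|z| \leq 2|x-z|$ for large $|z|$, the far tail is bounded by $C\int_{|w| \geq R/2} |f(w)|(1 + |w|^{N+2s})^{-1} \, dw$, whose finiteness is exactly the defining property of $L_s$. This proves well-definedness at every $x \in \Omega$.

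For continuity I would fix a compact $K \subset \Omega$ and use the exact decomposition $(-\Delta)^s f = I_r + J_r$ into its inner and outer parts. Since $|I_r(x)| \leq C r^{\varepsilon}$ uniformly on $K$, the outer parts satisfy $J_r \to (-\Delta)^s f$ uniformly on $K$ as $r \to 0$, so it suffices to show each $J_r$ is continuous and then pass to the uniform limit. With the choice $\delta = r$ the gradient correction lives entirely in $I_r$, so $J_r$ has the same form in both regimes. For fixed $r$ I would split $J_r$ once more into a bounded annulus $\{r \leq |z| \leq R\}$, where the integrand is jointly continuous and bounded and continuity in $x$ follows from dominated convergence on a bounded set, and a far tail $\{|z| > R\}$, which is rendered uniformly small in $x \in K$ for large $R$ by the $L_s$ estimate above.

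The step I expect to be the main obstacle is precisely this continuity of the outer part: the estimates must be uniform in $x$ over a compact set, yet $f$ is merely in $L_s$ (not Hölder) outside $\Omega$ and may concentrate on thin sets, so no naive pointwise dominating function is available for a direct application of dominated convergence to the whole outer integral. The device that overcomes this is the further annulus/tail splitting together with the comparison $|z| \leq 2|x-z|$, which converts the tail of the $z$-integral into the genuine $L_s$ tail of $f$ and thereby yields smallness uniform in $x \in K$.
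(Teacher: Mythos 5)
The paper itself offers no proof to compare against: Proposition \ref{Frac} is quoted directly from \cite[Proposition~2.4]{Silvestre}, so your argument stands or falls on its own merits. Its skeleton is the standard one and most of it is sound: the inner/outer splitting with $\delta=r$, the bound of the inner numerator by $[f]_{C^{0,2s+\varepsilon}}|z|^{2s+\varepsilon}$ (resp.\ by $[\nabla f]_{C^{0,2s+\varepsilon-1}}|z|^{2s+\varepsilon}$ via the fundamental theorem of calculus) so that the inner integrand is dominated by $|z|^{\varepsilon-N}$, the resulting $Cr^{\varepsilon}$ estimate giving both absolute convergence at the singularity and, uniformly on compact $K\subset\Omega$, the locally uniform convergence $J_r\to(-\Delta)^sf$, and the control of the far tail by the $L_s$-integral of $f$.

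The genuine gap is the continuity of $J_r$ on the annulus $\{r\le|z|\le R\}$. You assert that there ``the integrand is jointly continuous and bounded'' and apply dominated convergence. This is false in general: for $x\in K$ and $|z|\ge r$ the point $x-z$ typically lies \emph{outside} $\Omega$, where $f$ is only assumed to lie in $L_s(\mathbb{R}^N)$, hence is merely locally integrable and may be unbounded and discontinuous on a set of positive measure of the annulus; then $f(x'-z)\to f(x-z)$ as $x'\to x$ need not hold for a.e.\ fixed $z$, so dominated convergence has no pointwise limit to work with, and no bounded dominating function exists. Note that this is precisely the difficulty you flag in your closing paragraph, but the device you invoke there (annulus/tail splitting plus the comparison of $|z|$ with $|x-z|$) disposes only of the tail, not of the annulus. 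The standard repair is different: the annulus piece equals the convolution $\left(f\chi_{K'}\right)\ast k$ evaluated at $x$, where $k=\chi_{\{r\le|z|\le R\}}\,|z|^{-(N+2s)}$ is bounded with compact support and $K'$ is a suitable compact neighborhood; its continuity follows from continuity of translations in $L^1$, namely $\|(f\chi_{K'})(\cdot-x')-(f\chi_{K'})(\cdot-x)\|_{L^1}\to 0$ as $x'\to x$, or equivalently by approximating $f$ in $L^1(K')$ by continuous functions and passing to the uniform limit. With this replacement (and after reversing your comparison: the tail estimate needs $|x-z|\le 2|z|$, i.e.\ the argument of $f$ dominated by the kernel variable, rather than $|z|\le 2|x-z|$) the proof closes.
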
 
\noindent{\bf Remark:} A simple sufficient condition for function $f$ to satisfy the conditions in Proposition \ref{Frac} is that $f\in  L^1_{loc}(\mathbb{R}^N)\cap C^{2}(\Omega)$.\\

\begin{lemma}\label{lemma3}\cite{Bonforte}
Let $\langle x\rangle:=(1+|x|^2)^{1/2}$, $x\in\mathbb{R}^N$, $s \in (0,1]$, $d\geq 1$, and $q_0>N$. Then 
$$\langle x\rangle^{-q_0}\in  L^\infty(\mathbb{R}^N)\cap C^{\infty}(\mathbb{R}^N),\qquad\partial_x^2\langle x\rangle^{-q_0}\in L^\infty(\mathbb{R}^N),$$
 and 
$$
\left|(-\Delta)^s\langle x\rangle^{-q_0}\right|\lesssim \langle x\rangle^{-N-2s}.
$$
\end{lemma}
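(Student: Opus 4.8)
The plan is to treat the three assertions in turn, the first two being elementary pointwise computations and the third requiring a near/far decomposition of the singular integral. For the regularity and boundedness claims, I would first observe that $\langle x\rangle^2=1+|x|^2\geq 1$ is a strictly positive $C^\infty$ function, so $\langle x\rangle^{-q_0}$ is itself $C^\infty$, and since $\langle x\rangle\geq 1$ we get $\langle x\rangle^{-q_0}\leq 1$, hence $\langle x\rangle^{-q_0}\in L^\infty(\mathbb{R}^N)\cap C^\infty(\mathbb{R}^N)$. Differentiating directly gives $\partial_{x_i}\langle x\rangle^{-q_0}=-q_0\,x_i\langle x\rangle^{-q_0-2}$ and $\partial^2_{x_ix_j}\langle x\rangle^{-q_0}=-q_0\delta_{ij}\langle x\rangle^{-q_0-2}+q_0(q_0+2)\,x_ix_j\langle x\rangle^{-q_0-4}$; using $|x_i|\leq\langle x\rangle$ and $|x_ix_j|\leq\langle x\rangle^2$ these are bounded by $C\langle x\rangle^{-q_0-2}$, which proves $\partial_x^2\langle x\rangle^{-q_0}\in L^\infty(\mathbb{R}^N)$ and records the decay estimate $|\partial_x^2\langle x\rangle^{-q_0}|\lesssim\langle x\rangle^{-q_0-2}$ that I will reuse below.

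For the fractional Laplacian bound, set $v=\langle x\rangle^{-q_0}$. Since $v\leq 1$ and $q_0>N$, one has $v\in L^1(\mathbb{R}^N)\cap L_s(\mathbb{R}^N)$, and by the Remark following Proposition \ref{Frac} also $v\in L^1_{loc}(\mathbb{R}^N)\cap C^2(\mathbb{R}^N)$, so $(-\Delta)^s v$ is continuous and given pointwise by the formula in Definition \ref{def1}. I would first dispose of the region $|x|\leq 1$: there $(-\Delta)^s v$ is continuous, hence bounded, while $\langle x\rangle^{-N-2s}\gtrsim 1$, so the asserted inequality holds trivially. The endpoint $s=1$, where $(-\Delta)^1$ is the classical Laplacian, is also immediate from the first step, since $|\Delta v|\lesssim\langle x\rangle^{-q_0-2}\leq\langle x\rangle^{-N-2}$. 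It then remains to treat $|x|\geq 1$ with $s\in(0,1)$, for which I would employ the symmetric second-difference representation
$$(-\Delta)^s v(x)=\frac{C_{N,s}}{2}\int_{\mathbb{R}^N}\frac{2v(x)-v(x+z)-v(x-z)}{|z|^{N+2s}}\,dz,$$
obtained from the principal-value formula by symmetrizing $z\mapsto -z$, and which converges absolutely for every $s\in(0,1)$.

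The core of the argument is to split this integral at $|z|=\langle x\rangle/2$. On the inner region $|z|\leq\langle x\rangle/2$ I would bound the numerator by a second-order Taylor expansion, $|2v(x)-v(x+z)-v(x-z)|\leq |z|^2\sup_{|\xi-x|\leq|z|}|D^2v(\xi)|$; because every such $\xi$ satisfies $\langle\xi\rangle\approx\langle x\rangle$, the Step-one decay estimate yields $|D^2v(\xi)|\lesssim\langle x\rangle^{-q_0-2}$, and integrating $|z|^{2-N-2s}$ over $\{|z|\leq\langle x\rangle/2\}$ (finite since $2-2s>0$) produces a factor $\langle x\rangle^{2-2s}$, for a total contribution $\lesssim\langle x\rangle^{-q_0-2s}\leq\langle x\rangle^{-N-2s}$. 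On the outer region $|z|>\langle x\rangle/2$ I would estimate the three terms separately: the $v(x)$ term gives $v(x)\int_{|z|>\langle x\rangle/2}|z|^{-N-2s}\,dz\sim\langle x\rangle^{-q_0}\langle x\rangle^{-2s}$, while for the $v(x\pm z)$ terms I bound $|z|^{-N-2s}\leq C\langle x\rangle^{-N-2s}$ and use $\int_{\mathbb{R}^N}v(x\pm z)\,dz=\|v\|_{L^1}<\infty$. Adding the two regions delivers $|(-\Delta)^s v(x)|\lesssim\langle x\rangle^{-N-2s}$ for $|x|\geq 1$, which together with the $|x|\leq 1$ and $s=1$ cases completes the proof. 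The main obstacle I anticipate lies in the inner region: one must verify carefully that $\langle\xi\rangle\approx\langle x\rangle$ uniformly for $|\xi-x|\leq\langle x\rangle/2$, so that the $D^2v$ decay can be extracted from the integral, and then check that the resulting power $-q_0-2s$ is dominated by $-N-2s$ — which is precisely where the hypothesis $q_0>N$ enters.
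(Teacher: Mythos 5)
The paper does not actually prove this lemma: it is imported verbatim from the reference \cite{Bonforte} (Bonforte--V\'azquez), so there is no in-paper argument to compare yours against, and your proposal has to stand on its own --- which it does. The first two assertions are handled correctly by direct differentiation, and the decay estimate $|\partial_x^2\langle x\rangle^{-q_0}|\lesssim\langle x\rangle^{-q_0-2}$ you record is exactly what the singular-integral step needs. For the third assertion your structure is the standard one in this literature: the symmetrized second-difference form of $(-\Delta)^s$ (valid for bounded $C^2$ functions, $s\in(0,1)$), a split at $|z|=\langle x\rangle/2$, Taylor's theorem plus the uniform comparability $\tfrac12\langle x\rangle\le\langle\xi\rangle\le\tfrac32\langle x\rangle$ on the inner region (which follows from $\langle\cdot\rangle$ being $1$-Lipschitz, the point you flagged as the main obstacle), and $v\in L^1(\mathbb{R}^N)$ --- here $q_0>N$ enters --- together with $|z|^{-N-2s}\le C\langle x\rangle^{-N-2s}$ on the outer region. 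The bookkeeping is right: the inner region and the $v(x)$ tail give $\langle x\rangle^{-q_0-2s}\le\langle x\rangle^{-N-2s}$, while the genuinely dominant contribution $\langle x\rangle^{-N-2s}\|v\|_{L^1}$ comes from the far-field mass, which is why the stated decay is $-N-2s$ rather than $-q_0-2s$. Your separate treatment of $s=1$ (classical Laplacian) and of the region $|x|\le1$ correctly closes the cases the integral argument does not cover; in fact the near/far splitting works uniformly for all $x$, so the $|x|\le1$ case is a convenience rather than a necessity. Two cosmetic remarks: the implied constants depend on $N$, $s$, $q_0$ (the inner integral contributes a factor $1/(2-2s)$ that degenerates as $s\to1^-$, harmless since $s$ is fixed and $s=1$ is treated separately), and the hypothesis ``$d\ge1$'' in the statement is evidently a typo for $N\ge1$ and plays no role in your argument or anywhere else.
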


\begin{lemma}\label{lemma4}
Let $\psi$ be a smooth function satisfying $\partial_x^2\psi\in L^\infty(\mathbb{R}^N)$. For any $R>0$, let $\psi_R$ be a function defined by
$$ \psi_R(x):= \psi(R^{-1} x) \quad \text{ for all } x \in \mathbb{R}^N.$$
Then, $(-\Delta)^s (\psi_R)$,  $s \in (0,1]$,  satisfies the following scaling properties:
$$(-\Delta)^s \psi_R(x)= R^{-2s}(-\Delta)^s\psi(R^{-1} x), \quad \text{ for all } x \in \mathbb{R}^N. $$
\end{lemma}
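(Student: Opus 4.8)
The plan is to verify the identity directly from the pointwise representations, treating the local endpoint $s=1$ separately from the genuinely nonlocal range $s\in(0,1)$. For $s=1$ the operator $(-\Delta)^1$ is the classical Laplacian, so the claim reduces to the elementary chain rule: since $\partial_{x_i}\psi_R(x)=R^{-1}(\partial_i\psi)(R^{-1}x)$ and hence $\partial_{x_i}^2\psi_R(x)=R^{-2}(\partial_i^2\psi)(R^{-1}x)$, summing over $i$ yields $-\Delta\psi_R(x)=R^{-2}(-\Delta\psi)(R^{-1}x)$, which is exactly the stated formula with exponent $2s=2$.

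For $s\in(0,1)$ I would first confirm that the pointwise formulas of Definition \ref{def1} apply to both $\psi$ and $\psi_R$. Because $\psi$ is smooth it lies locally in the relevant Hölder classes $C^{0,2s+\varepsilon}$ or $C^{1,2s+\varepsilon-1}$, and boundedness of $\psi$ (which follows since $\partial_x^2\psi\in L^\infty$ and $\psi$ is smooth, once one notes any fixed reference value) gives $\psi\in L_s(\mathbb{R}^N)$ via $\int_{\mathbb{R}^N}(1+|x|^{N+2s})^{-1}\,dx<\infty$; the dilate $\psi_R$ enjoys the same properties. Thus by Proposition \ref{Frac} the quantity $(-\Delta)^s\psi_R$ is represented by the integral of Definition \ref{def1} at every point.

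The heart of the argument is then the substitution $y=Rz$, i.e. $z=R^{-1}y$, combined with the abbreviation $\xi:=R^{-1}x$, so that $\psi_R(y)=\psi(z)$, $x-y=R(\xi-z)$, $|x-y|=R\,|\xi-z|$, and $dy=R^N\,dz$. For $0<s<1/2$ one uses the plain integral form, and the Jacobian together with the kernel scaling produces the factor $R^{N}\cdot R^{-(N+2s)}=R^{-2s}$, turning the integral into $R^{-2s}(-\Delta)^s\psi(\xi)=R^{-2s}(-\Delta)^s\psi(R^{-1}x)$, as desired. For $1/2\le s<1$ one must additionally track the gradient-correction term; here $\nabla\psi_R(x)=R^{-1}(\nabla\psi)(\xi)$ and $x-y=R(\xi-z)$ combine so that $\nabla\psi_R(x)\cdot(x-y)=(\nabla\psi)(\xi)\cdot(\xi-z)$, while the truncation $\mathcal{X}_{|x-y|<\delta}$ becomes $\mathcal{X}_{|\xi-z|<\delta/R}$.

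The only genuine bookkeeping point, which I expect to be the main (mild) obstacle, is precisely this transformation of the truncation radius from $\delta$ to $\delta/R$: it would seem to change the integrand. This is resolved by the $\delta$-independence built into Definition \ref{def1}, since the formula is asserted to hold for every $\delta>0$; replacing $\delta$ by $\delta/R$ is therefore harmless, and the same overall factor $R^{-2s}$ emerges. Collecting the two subcases and the endpoint $s=1$ completes the proof of the scaling identity.

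\begin{proof}
We distinguish the local endpoint from the nonlocal range.

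\textbf{Case $s=1$.} Here $(-\Delta)^1=-\Delta$ is the classical Laplacian. For each $i$, the chain rule gives $\partial_{x_i}\psi_R(x)=R^{-1}(\partial_i\psi)(R^{-1}x)$ and $\partial_{x_i}^2\psi_R(x)=R^{-2}(\partial_i^2\psi)(R^{-1}x)$. Summing over $i=1,\dots,N$,
\[
-\Delta\psi_R(x)=-R^{-2}\sum_{i=1}^N(\partial_i^2\psi)(R^{-1}x)=R^{-2}(-\Delta\psi)(R^{-1}x),
\]
which is the claimed identity with $2s=2$.

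\textbf{Case $s\in(0,1)$.} Since $\psi$ is smooth it lies locally in the Hölder classes $C^{0,2s+\varepsilon}$ (for $0<s<1/2$) or $C^{1,2s+\varepsilon-1}$ (for $1/2\le s<1$), and, being bounded, it satisfies $\int_{\mathbb{R}^N}\frac{|\psi(x)|}{1+|x|^{N+2s}}\,dx<\infty$, so $\psi\in L_s(\mathbb{R}^N)$; the dilate $\psi_R$ enjoys the same properties. Hence by Proposition \ref{Frac}, both $(-\Delta)^s\psi$ and $(-\Delta)^s\psi_R$ are given pointwise by the formulas of Definition \ref{def1}.

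Write $\xi:=R^{-1}x$ and substitute $z=R^{-1}y$, so that $y=Rz$, $dy=R^N\,dz$, and
\[
\psi_R(y)=\psi(z),\qquad x-y=R(\xi-z),\qquad |x-y|=R\,|\xi-z|.
\]

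\emph{Subcase $0<s<1/2$.} Using the non-truncated form,
\[
(-\Delta)^s\psi_R(x)=C_{N,s}\int_{\mathbb{R}^N}\frac{\psi(\xi)-\psi(z)}{R^{N+2s}|\xi-z|^{N+2s}}\,R^N\,dz
=R^{-2s}\,C_{N,s}\int_{\mathbb{R}^N}\frac{\psi(\xi)-\psi(z)}{|\xi-z|^{N+2s}}\,dz,
\]
which equals $R^{-2s}(-\Delta)^s\psi(\xi)=R^{-2s}(-\Delta)^s\psi(R^{-1}x)$.

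\emph{Subcase $1/2\le s<1$.} Fix $\delta>0$. Since $\nabla\psi_R(x)=R^{-1}(\nabla\psi)(\xi)$ and $x-y=R(\xi-z)$, the correction term transforms as
\[
\nabla\psi_R(x)\cdot(x-y)\,\mathcal{X}_{|x-y|<\delta}(y)
=(\nabla\psi)(\xi)\cdot(\xi-z)\,\mathcal{X}_{|\xi-z|<\delta/R}(z).
\]
Therefore, after the same change of variables and the factor $R^N\cdot R^{-(N+2s)}=R^{-2s}$,
\[
(-\Delta)^s\psi_R(x)=R^{-2s}\,C_{N,s}\int_{\mathbb{R}^N}\frac{\psi(\xi)-\psi(z)-(\nabla\psi)(\xi)\cdot(\xi-z)\,\mathcal{X}_{|\xi-z|<\delta/R}(z)}{|\xi-z|^{N+2s}}\,dz.
\]
Because the formula in Definition \ref{def1} is valid for every positive truncation radius, the right-hand side equals $R^{-2s}(-\Delta)^s\psi(\xi)=R^{-2s}(-\Delta)^s\psi(R^{-1}x)$, the value being independent of the choice $\delta/R>0$.

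Combining all cases yields $(-\Delta)^s\psi_R(x)=R^{-2s}(-\Delta)^s\psi(R^{-1}x)$ for every $x\in\mathbb{R}^N$ and $s\in(0,1]$.
\end{proof}
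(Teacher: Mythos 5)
The paper states Lemma \ref{lemma4} without any proof (it is treated as a known scaling property and used directly in the proof of Lemma \ref{lemma5}), so there is no argument of the paper's to compare against; your proposal must stand on its own. Its core does: the $s=1$ case by the chain rule, and the $s\in(0,1)$ case by the dilation substitution $y=Rz$, $\xi=R^{-1}x$, with the kernel and Jacobian producing $R^{N}\cdot R^{-(N+2s)}=R^{-2s}$, is exactly the standard computation, and your handling of the gradient-correction term — observing that the truncation radius changes from $\delta$ to $\delta/R$ and that this is harmless because Definition \ref{def1} is $\delta$-independent — is the right way to dispose of the only subtle point.

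There is, however, one false auxiliary claim: you assert that $\partial_x^2\psi\in L^\infty(\mathbb{R}^N)$ together with smoothness forces $\psi$ to be bounded, and you use this to place $\psi$ in $L_s(\mathbb{R}^N)$ so that Proposition \ref{Frac} applies. This implication fails: $\psi(x)=|x|^2$ (or any nonconstant affine function) is smooth with bounded second derivatives yet unbounded, and for such $\psi$ the integral in Definition \ref{def1} genuinely diverges at infinity when $s\in(0,1)$, since quadratic growth requires $s>1$ for membership in $L_s(\mathbb{R}^N)$. This is really a looseness in the lemma's hypotheses rather than in your substitution argument: the clean repair is either to add the hypothesis $\psi\in L^\infty(\mathbb{R}^N)$ (which holds in the only application, $\psi=\langle x\rangle^{-q_0}$ from Lemma \ref{lemma3}, so nothing downstream is affected), or to note that your change of variables is an equivalence — the integral defining $(-\Delta)^s\psi_R(x)$ converges if and only if the one defining $(-\Delta)^s\psi(R^{-1}x)$ does — so the identity holds in the sense that whenever either side is defined, both are, and they agree. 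With that adjustment your proof is complete.
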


\begin{lemma}\label{lemma5} Let $R>0$, $p>1$, $0<\alpha <2$, $N\geq1$, and $N<q_0<N+\alpha p$. Then, the following estimate holds
\begin{equation}\label{10}
\int_{\mathbb{R}^N}(\Phi_R(x))^{-1/(p-1)}\,\big|\mathcal{L}\Phi_R(x)\big|^{p/(p-1)}\, dx\lesssim R^{-\frac{2 p}{p-1}+N}+R^{-\frac{\alpha p}{p-1}+N},
\end{equation}
where $\Phi_R(x)=\langle {x}/{R}\rangle^{-q_0}=(1+|x/R|^2)^{-q_0/2}$.
\end{lemma}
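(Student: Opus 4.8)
The plan is to reduce the whole estimate to pointwise decay bounds on $\mathcal{L}\Phi_R$ obtained from the scaling law, and then to evaluate the resulting integral by a single change of variables, checking that the hypothesis $N<q_0<N+\alpha p$ is exactly what forces convergence. Writing $\psi:=\langle\cdot\rangle^{-q_0}$ so that $\Phi_R=\psi_R$, and using $\mathcal{L}=-\Delta+(-\Delta)^{\alpha/2}$, I would first apply the scaling identity of Lemma \ref{lemma4} at $s=1$ and $s=\alpha/2$ to write
$$\mathcal{L}\Phi_R(x)=R^{-2}(-\Delta)\psi(x/R)+R^{-\alpha}(-\Delta)^{\alpha/2}\psi(x/R).$$
Lemma \ref{lemma3}, applied at $s=1$ and at $s=\alpha/2$, then supplies the pointwise bounds $|(-\Delta)\psi(y)|\lesssim\langle y\rangle^{-N-2}$ and $|(-\Delta)^{\alpha/2}\psi(y)|\lesssim\langle y\rangle^{-N-\alpha}$, so that
$$|\mathcal{L}\Phi_R(x)|\lesssim R^{-2}\langle x/R\rangle^{-N-2}+R^{-\alpha}\langle x/R\rangle^{-N-\alpha}.$$

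Next, I would raise this to the power $p/(p-1)>1$, invoke the elementary inequality $(a+b)^{q}\lesssim a^{q}+b^{q}$ valid for $q\ge 1$, and use the identity $\Phi_R(x)^{-1/(p-1)}=\langle x/R\rangle^{q_0/(p-1)}$. Collecting the powers of $R$ and of $\langle x/R\rangle$ term by term gives
$$\int_{\mathbb{R}^N}\Phi_R^{-1/(p-1)}\,|\mathcal{L}\Phi_R|^{p/(p-1)}\,dx\lesssim R^{-\frac{2p}{p-1}}\!\int_{\mathbb{R}^N}\!\langle x/R\rangle^{\frac{q_0-(N+2)p}{p-1}}\,dx+R^{-\frac{\alpha p}{p-1}}\!\int_{\mathbb{R}^N}\!\langle x/R\rangle^{\frac{q_0-(N+\alpha)p}{p-1}}\,dx.$$
Performing the substitution $x=Rz$ produces a factor $R^{N}$ from $dx$ and turns each integral into $R^{N}\int_{\mathbb{R}^N}\langle z\rangle^{\theta}\,dz$, which immediately yields the two claimed exponents $R^{-2p/(p-1)+N}$ and $R^{-\alpha p/(p-1)+N}$.

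What remains is pure convergence bookkeeping: $\int_{\mathbb{R}^N}\langle z\rangle^{\theta}\,dz<\infty$ precisely when $\theta<-N$. For the fractional term this reads $q_0-(N+\alpha)p<-N(p-1)$, i.e. $q_0<N+\alpha p$, and for the local term it reads $q_0<N+2p$. I expect the only delicate point to be verifying that the binding constraint is exactly the fractional one $q_0<N+\alpha p$ appearing in the hypothesis; this is where $0<\alpha<2$ enters, since it guarantees $N+\alpha p<N+2p$ and hence that the local integral converges automatically once the fractional one does. The lower bound $q_0>N$ plays no role in the integration itself and is needed only to license the decay estimate of Lemma \ref{lemma3}. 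Assembling these facts gives the stated bound.
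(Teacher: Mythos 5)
Your proposal is correct and follows essentially the same route as the paper's proof: scaling via Lemma \ref{lemma4}, the pointwise decay bounds of Lemma \ref{lemma3} at $s=1$ and $s=\alpha/2$, subadditivity of the power $p/(p-1)$, and the substitution $x=Rz$ with the convergence check that $q_0<N+\alpha p<N+2p$ forces both exponents below $-N$. Your explicit remarks on which hypothesis licenses which step (the role of $q_0>N$ for Lemma \ref{lemma3} and of $\alpha<2$ for the local term) are accurate refinements of the paper's more compressed bookkeeping.
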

\begin{proof}  Let $\tilde{x}=x/R$; by Lemma \ref{lemma4} we have $(-\Delta)^s\Phi_R(x)=R^{-2s}(-\Delta)^s\Phi_R(\tilde{x})$, for $s\in\{1,\alpha/2\}$. Therefore, using Lemma \ref{lemma3}, we conclude that
\begin{eqnarray*}
&{}&\int_{\mathbb{R}^N}(\Phi_R(x))^{-1/(p-1)}\,\big|\mathcal{L} \Phi_R(x)\big|^{p/(p-1)}\, dx\\
&{}&\lesssim\int_{\mathbb{R}^N}(\Phi_R(x))^{-1/(p-1)}\,\big|(-\Delta) \Phi_R(x)\big|^{p/(p-1)}\, dx+\int_{\mathbb{R}^N}(\Phi_R(x))^{-1/(p-1)}\,\big|(-\Delta)^{\alpha/2} \Phi_R(x)\big|^{p/(p-1)}\, dx\\
&{}&\lesssim R^{-\frac{2p}{p-1}+N}\int_{\mathbb{R}^N}\langle \tilde{x}\rangle^{\frac{q_0}{p-1}-\frac{(N+2)p}{p-1}}\, d \tilde{x}+R^{-\frac{\alpha p}{p-1}+N}\int_{\mathbb{R}^N}\langle \tilde{x}\rangle^{\frac{q_0}{p-1}-\frac{(N+\alpha)p}{p-1}}\, d \tilde{x}\\
&{}&\lesssim R^{-\frac{2p}{p-1}+N}+R^{-\frac{\alpha p}{p-1}+N},
\end{eqnarray*}
where we have used the fact that $q_0<N+\alpha p<N+2p$ implies $\frac{(N+2)p}{p-1}-\frac{q_0}{p-1}>N$ and $\frac{(N+\alpha)p}{p-1}-\frac{q_0}{p-1}>N$.
\end{proof}

In order to introduce the notion of  the mild solution of our problem, and to prove the main results especially the theorem of local existence, we need to give a full review about the heat semigroup and the associated heat kernel of  the operator $-\mathcal{L}=\Delta-(-\Delta)^{\alpha/2}$, $\alpha\in (0,2)$ (see e.g. \cite{Song}). The comparison principle is needed as well.\\
We start by an alternative expression of the fractional Laplacian operator $(-\Delta)^{\alpha/2}$ via the Fourier transform 
$$
\begin{array}{llll}
(-\Delta)^{\alpha/2}:&H^\alpha(\mathbb{R}^N)\subseteq L^2(\mathbb{R}^N)&\longrightarrow& L^2(\mathbb{R}^N)\\
{}&\qquad \quad u&\longmapsto& (-\Delta)^{\alpha/2}(u)=\mathcal{F}^{-1}(|\xi|^\alpha\mathcal{F}(u))\\
\end{array}
$$
where 
$$H^\alpha(\mathbb{R}^N)=D\left((-\Delta)^{\alpha/2}\right)=\{u\in L^2(\mathbb{R}^N);\,\, |\xi|^\alpha\mathcal{F}(u)\in L^2(\mathbb{R}^N)\},\qquad s>0.$$
 As a consequence of the fact that $(-\Delta)^{\alpha/2}$ is a positive definite self-adjoint operator on the Hilbert space $L^2(\mathbb{R}^N)$, is that the operator $-(-\Delta)^{\alpha/2}$, e.g. Yosida \cite{Yosi} or \cite[Theorem~4.9]{Grigoryan}, generates a strongly continuous semigroup $T(t):=e^{-t(-\Delta)^{\alpha/2}}$ on $L^2(\mathbb{R}^N)$. It holds $T(t)v=P_\alpha(t)\ast v$, where $P_\alpha$ is
the fundamental solution of the fractional diffusion equation $u_t+(-\Delta)^{\alpha/2}u=0$, represented via the Fourier transform by
\begin{equation}\label{FS}
P_\alpha(t)(x):=P_\alpha(x,t)=\frac{1}{(2\pi)^{N/2}}\int_{\mathbb{R}^N}e^{ix.\xi-t|\xi|^\alpha}\,d\xi.
\end{equation}
It is well-known that this function, also known  as the heat kernel of $-(-\Delta)^{\alpha/2}$, satisfies
\begin{equation}\label{P_1}
    P_\alpha(1)\in L^\infty(\mathbb{R}^N)\cap
L^1(\mathbb{R}^N),\quad
P_\alpha(x,t)\geq0,\quad\int_{\mathbb{R}^N}P_\alpha(x,t)\,dx=1,
\end{equation}
\noindent for all $x\in\mathbb{R}^N$ and $t>0.$ Hence, using Young's inequality for the convolution
and the following self-similar form $P_\alpha(x,t)=t^{-N/\alpha}P_\alpha(xt^{-1/\alpha},1)$, see e.g. \cite{Miao}, we have
\begin{equation}\label{P_2}
\|P_\alpha(t)\ast v\|_q\;\leq \;Ct^{-\frac{N}{\alpha}(\frac{1}{r}-\frac{1}{q})}\|v\|_r,
\end{equation}
\begin{equation}\label{P_3}
\|\nabla P_\alpha(t)\|_q\;\leq \;Ct^{-\frac{N}{\alpha}(1-\frac{1}{q})-\frac{1}{\alpha}},
\end{equation}
for all $v\in L^r(\mathbb{R}^N)$ and  all $1\leq r\leq q\leq\infty,$ $t>0$.\\
Using $H^2(\mathbb{R}^N)\subseteq H^\alpha(\mathbb{R}^N)$, one can define the mixed local-nonlocal operator by
$$
\begin{array}{llll}
\mathcal{L}:&H^2(\mathbb{R}^N)\subseteq L^2(\mathbb{R}^N)&\longrightarrow& L^2(\mathbb{R}^N)\\
{}&\qquad \quad u&\longmapsto& \mathcal{L}(u)=-\Delta u+(-\Delta)^{\alpha/2}(u)\\
\end{array}
$$
which, naturally, implies that the operator $-\mathcal{L}$ generates a strongly continuous semigroup of contractions $\{S(t):=e^{-t\mathcal{L}}\}_{t\geq 0}$ on $L^2(\mathbb{R}^N)$. It holds $S(t)v=E_\alpha(t)\ast v$, where $E_\alpha(t)=P_2(t)\ast P_\alpha(t)$ is the heat kernel of the operator $-\mathcal{L}$, and $P_2(t)$ is the fundamental solution of the heat  equation $u_t-\Delta u=0$, represented by
$$P_2(t)=P_2(t,x)=\frac{1}{(4\pi t)^{N/2}}e^{-\frac{|x|^2}{4t}}.$$
Note that,  see \cite[Proposition~48.4]{souplet}, the function $P_2(t)$ is the heat kernel of $\Delta$ and satisfies the properties \eqref{P_1}, \eqref{P_2}, and \eqref{P_3} by replacing $\alpha$ by $2$.\\
It is clear that $E_\alpha(t)$ is the fundamental solution of the diffusion equation $u_t + \mathcal{L}u=0$, and satisfies the following properties.

\begin{lemma}\label{Property1}
The heat kernel $E_\alpha(t)$ satisfies the following properties
\begin{enumerate}
  \item[$(i)$] $E_\alpha \in C^{\infty}(\mathbb{R}^+\times\mathbb{R}^N)$ and $E_\alpha(t)\geq 0$, for all $x\in\mathbb{R}^N$ and $t>0$,
  \item[$(ii)$] For all $t>0$, we have 
  $$\int_{\mathbb{R}^N}E_\alpha(t,x)dx=1.$$
   \item[$(iii)$] For every $t,\tau>0$, we have 
  $$E_\alpha(t)\ast E_\alpha(\tau)=E_\alpha(t+\tau).$$
  \end{enumerate}
\end{lemma}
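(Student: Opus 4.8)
The plan is to carry the factorization $E_\alpha(t)=P_2(t)\ast P_\alpha(t)$ through all three assertions, reducing each one to a property of the two building blocks $P_2$ and $P_\alpha$ that is already in hand. For $(i)$, nonnegativity is immediate: by \eqref{P_1} we have $P_\alpha(t,\cdot)\geq0$, and the explicit Gaussian formula gives $P_2(t,\cdot)\geq0$, so the convolution $E_\alpha(t,x)=\int_{\mathbb{R}^N}P_2(t,x-y)\,P_\alpha(t,y)\,dy$ of two nonnegative functions is nonnegative. For smoothness I would pass to the Fourier side: combining the representation \eqref{FS} with the fact that the Fourier multiplier of $P_2(t)$ is $e^{-t|\xi|^2}$ shows that, up to a normalization constant $c_N$, one has $E_\alpha(t,x)=c_N\int_{\mathbb{R}^N}e^{ix\cdot\xi}\,e^{-t(|\xi|^2+|\xi|^{\alpha})}\,d\xi$. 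For every fixed $t>0$ the multiplier $e^{-t(|\xi|^2+|\xi|^{\alpha})}$ decays faster than any polynomial in $\xi$, and this persists after multiplication by any power of $\xi$ (coming from an $x$-derivative) or by any power of $|\xi|^2+|\xi|^\alpha$ (coming from a $t$-derivative); hence differentiation under the integral sign is justified to all orders, giving $E_\alpha\in C^\infty(\mathbb{R}^+\times\mathbb{R}^N)$.

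For $(ii)$ I would use Tonelli's theorem, licensed by the nonnegativity just established, to write
\[
\int_{\mathbb{R}^N}E_\alpha(t,x)\,dx=\int_{\mathbb{R}^N}P_\alpha(t,y)\Big(\int_{\mathbb{R}^N}P_2(t,x-y)\,dx\Big)\,dy=\int_{\mathbb{R}^N}P_\alpha(t,y)\,dy=1,
\]
where the inner integral equals $1$ by the unit-mass property of $P_2$ and the final equality is the unit-mass property of $P_\alpha$ from \eqref{P_1}.

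For $(iii)$, the semigroup (Chapman--Kolmogorov) identity, I would reduce to the analogous identity for each kernel separately. Since $\Delta$ and $-(-\Delta)^{\alpha/2}$ generate the strongly continuous semigroups $e^{t\Delta}$ and $T(t)=e^{-t(-\Delta)^{\alpha/2}}$ with convolution kernels $P_2(t)$ and $P_\alpha(t)$, the semigroup law $T(t)T(\tau)=T(t+\tau)$ translates into $P_\alpha(t)\ast P_\alpha(\tau)=P_\alpha(t+\tau)$, and likewise $P_2(t)\ast P_2(\tau)=P_2(t+\tau)$. Using commutativity and associativity of convolution, it then follows that
\[
E_\alpha(t)\ast E_\alpha(\tau)=\big(P_2(t)\ast P_\alpha(t)\big)\ast\big(P_2(\tau)\ast P_\alpha(\tau)\big)=\big(P_2(t)\ast P_2(\tau)\big)\ast\big(P_\alpha(t)\ast P_\alpha(\tau)\big)=P_2(t+\tau)\ast P_\alpha(t+\tau)=E_\alpha(t+\tau).
\]

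I expect $(iii)$ to be the only point demanding genuine care: the reordering of the fourfold convolution must be justified, but this is unproblematic since each factor is a nonnegative $L^1$ function of unit mass, so the full integrand is integrable and Fubini applies. An equivalent and perhaps cleaner route avoids the reordering entirely by working on the Fourier side, where $\mathcal{F}(E_\alpha(t))$ is a constant multiple of $e^{-t(|\xi|^2+|\xi|^\alpha)}$, so that convolution becomes multiplication of multipliers and the exponents simply add; injectivity of the Fourier transform then yields the identity. The remaining subtlety is purely one of bookkeeping the normalization constants consistently between \eqref{FS} and the convolution theorem, which does not affect the additivity of the exponents that drives the argument.
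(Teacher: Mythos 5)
The paper offers no proof of this lemma at all: it is stated as evident (``It is clear that \(E_\alpha(t)\) is the fundamental solution \dots and satisfies the following properties''), with the justification implicitly resting on the factorization \(E_\alpha(t)=P_2(t)\ast P_\alpha(t)\) and the properties \eqref{P_1}--\eqref{P_3} of the two kernels. Your proof is correct and supplies exactly those missing details, by the same route the paper implicitly intends and that its proof of Lemma~\ref{Property2} uses explicitly: nonnegativity and unit mass from Tonelli, joint smoothness from the Fourier representation with multiplier \(e^{-t(|\xi|^2+|\xi|^\alpha)}\), and the semigroup law from reordering the fourfold convolution of nonnegative \(L^1\) kernels (or, equivalently, from multiplicativity on the Fourier side), with the Fubini and normalization issues correctly flagged and handled.
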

\begin{lemma}\label{Property2}
The heat kernel $E_\alpha(t)$ satisfies the following properties
\begin{itemize}
   \item For every $v\in L^r(\mathbb{R}^N)$ and  all $1\leq r\leq q\leq\infty$, $t>0$, we have
  \begin{equation}\label{Pr1}
  \|E_\alpha(t)\ast v\|_q\,\leq \;C\min\left\{t^{-\frac{N}{2}(\frac{1}{r}-\frac{1}{q})},\,t^{-\frac{N}{\alpha}(\frac{1}{r}-\frac{1}{q})}\right\}\|v\|_r.
\end{equation}
   \item For all $1\leq q\leq \infty$, we have 
    \begin{equation}\label{Pr2}
    \|\nabla E_\alpha(t)\|_q\,\leq \;C\min\left\{t^{-\frac{N}{2}(1-\frac{1}{q})-\frac{1}{2}},\,t^{-\frac{N}{\alpha}(1-\frac{1}{q})-\frac{1}{\alpha}}\right\}.
  \end{equation}
  \end{itemize}
\end{lemma}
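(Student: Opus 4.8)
The plan is to exploit the factorization $E_\alpha(t)=P_2(t)\ast P_\alpha(t)$ together with the crucial observation, recorded in \eqref{P_1}, that each factor is a nonnegative kernel with $\|P_2(t)\|_1=\|P_\alpha(t)\|_1=1$. Consequently, by Young's inequality, convolution against either $P_2(t)$ or $P_\alpha(t)$ is a contraction on every $L^q(\mathbb{R}^N)$, that is $\|P_2(t)\ast w\|_q\leq\|w\|_q$ and $\|P_\alpha(t)\ast w\|_q\leq\|w\|_q$. The whole proof then reduces to grouping the double convolution in the two possible ways and assigning the smoothing and time decay to one factor while the other is absorbed as a contraction; the two groupings produce precisely the two entries of each minimum.

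For the $L^r$--$L^q$ estimate \eqref{Pr1}, I would first write $E_\alpha(t)\ast v=P_\alpha(t)\ast\big(P_2(t)\ast v\big)$. Applying the contraction bound for $P_\alpha(t)$ and then \eqref{P_2} with $\alpha$ replaced by $2$ gives $\|E_\alpha(t)\ast v\|_q\leq\|P_2(t)\ast v\|_q\leq C\,t^{-\frac{N}{2}(\frac1r-\frac1q)}\|v\|_r$. Regrouping as $E_\alpha(t)\ast v=P_2(t)\ast\big(P_\alpha(t)\ast v\big)$ and using the contraction bound for $P_2(t)$ followed by \eqref{P_2} for $P_\alpha$ yields the companion bound $C\,t^{-\frac{N}{\alpha}(\frac1r-\frac1q)}\|v\|_r$. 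Taking the minimum of the two is exactly \eqref{Pr1}.

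For the gradient estimate \eqref{Pr2}, I would move the derivative onto whichever factor is convenient, using $\nabla E_\alpha(t)=(\nabla P_2(t))\ast P_\alpha(t)=P_2(t)\ast(\nabla P_\alpha(t))$. Young's inequality in the form $\|f\ast g\|_q\leq\|f\|_q\|g\|_1$ then gives, on the one hand, $\|\nabla E_\alpha(t)\|_q\leq\|\nabla P_2(t)\|_q\,\|P_\alpha(t)\|_1\leq C\,t^{-\frac{N}{2}(1-\frac1q)-\frac12}$ from \eqref{P_3} with $\alpha$ replaced by $2$, and on the other hand $\|\nabla E_\alpha(t)\|_q\leq\|P_2(t)\|_1\,\|\nabla P_\alpha(t)\|_q\leq C\,t^{-\frac{N}{\alpha}(1-\frac1q)-\frac1\alpha}$ from \eqref{P_3} for $P_\alpha$. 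Once more the minimum of the two bounds is \eqref{Pr2}.

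The argument is essentially mechanical once the factorization and the normalization $\|P_2(t)\|_1=\|P_\alpha(t)\|_1=1$ are in place, so no single step is a genuine obstacle. The only point requiring a word of care is the interchange of the gradient with the convolution in \eqref{Pr2}, i.e.\ differentiation under the integral sign; this is legitimate for each fixed $t>0$ by the $C^\infty$ smoothness of the kernels and the integrability of $\nabla P_2(t)$ and $\nabla P_\alpha(t)$ guaranteed by \eqref{P_3}.
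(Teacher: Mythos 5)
Your proof is correct and follows essentially the same route as the paper: factor $E_\alpha(t)=P_2(t)\ast P_\alpha(t)$, use Young's inequality with the normalization $\|P_2(t)\|_1=\|P_\alpha(t)\|_1=1$, and let each factor in turn carry the smoothing/decay via \eqref{P_2}--\eqref{P_3}, taking the minimum of the two resulting bounds. The only cosmetic difference is that in \eqref{Pr1} you apply the contraction on the outer factor and the smoothing on the inner one, while the paper does the reverse; your added remark justifying the interchange of $\nabla$ with the convolution is a point the paper leaves implicit.
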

\begin{proof} For every $1\leq r\leq q\leq\infty$ and $t>0$, using Young's inequality for the convolution and the fact that $\|P_i(t)\|_1=1$, $i=2,\alpha$, we have
\begin{eqnarray}\label{Pr4}
\|E_\alpha(t) \ast v\|_q&=&\|P_2(t) \ast \left(P_\alpha(t)\ast v\right)\|_q\nonumber\\
&\leq&C\,t^{-\frac{N}{2}(\frac{1}{r}-\frac{1}{q})}\|P_\alpha(t)\ast v\|_r\nonumber\\
&\leq&C\,t^{-\frac{N}{2}(\frac{1}{r}-\frac{1}{q})}\|P_\alpha(t)\|_1\|v\|_r\nonumber\\
&=&C\,t^{-\frac{N}{2}(\frac{1}{r}-\frac{1}{q})}\|v\|_r,
\end{eqnarray}           
  and
   \begin{eqnarray}\label{Pr5}
\|E_\alpha(t) \ast v\|_q&=&\|P_\alpha(t) \ast \left(P_2(t)\ast v\right)\|_q\nonumber\\
&\leq&C\,t^{-\frac{N}{\alpha}(\frac{1}{r}-\frac{1}{q})}\|P_2(t)\ast v\|_r\nonumber\\
&\leq&C\,t^{-\frac{N}{\alpha}(\frac{1}{r}-\frac{1}{q})}\|P_2(t)\|_1\|v\|_r\nonumber\\
&=&C\,t^{-\frac{N}{\alpha}(\frac{1}{r}-\frac{1}{q})}\|v\|_r.
\end{eqnarray}                
Combining \eqref{Pr4} and \eqref{Pr5}, \eqref{Pr1} is obtained. Similarly, to get \eqref{Pr2}, we have
$$
\|\nabla E_\alpha(t)\|_q=\|\nabla(P_2(t)) \ast P_\alpha(t)\|_q\leq\|\nabla(P_2(t))\|_q \|P_\alpha(t)\|_1\leq C\,t^{-\frac{N}{2}(1-\frac{1}{q})-\frac{1}{2}},
$$            
  and
  $$
\|\nabla E_\alpha(t)\|_q=\|\nabla(P_\alpha(t)) \ast P_2(t)\|_q\leq\|\nabla(P_\alpha(t))\|_q \|P_2(t)\|_1\leq C\,t^{-\frac{N}{\alpha}(1-\frac{1}{q})-\frac{1}{\alpha}}.
$$
\end{proof}
\begin{lemma}\label{Taylor}
Let $g\in L^1(\mathbb{R}^N)$ and put $\displaystyle M_g=\int_{\mathbb{R}^N}g(x)\,dx$. We have 
\begin{equation}\label{TaylorInequality1}
\lim\limits_{t\to\infty}\|E_\alpha(t)\ast g-M_gE_\alpha(t)\|_1=0.
\end{equation}
If, in addition, $xg(x) \in L^1(\mathbb{R}^N)$, then
\begin{equation}\label{TaylorInequality2}
\|E_\alpha(t)\ast g-M_gE_\alpha(t)\|_1\leq C\min\{t^{-1/2},\,t^{-1/\alpha}\}\|xg(x)\|_1,\qquad\hbox{for all}\,\,t>0.
\end{equation}
\end{lemma}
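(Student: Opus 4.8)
The plan is to prove the quantitative bound \eqref{TaylorInequality2} first, and then derive the qualitative decay \eqref{TaylorInequality1} from it by a density argument. The starting point for both is the pointwise identity
$$
E_\alpha(t)\ast g(x)-M_g E_\alpha(t)(x)=\int_{\mathbb{R}^N}\big[E_\alpha(t,x-y)-E_\alpha(t,x)\big]\,g(y)\,dy,
$$
which follows immediately from the definition of $M_g$ by writing $M_g E_\alpha(t)(x)=\int_{\mathbb{R}^N}E_\alpha(t,x)g(y)\,dy$. Taking the $L^1$ norm in $x$ and applying Minkowski's integral inequality, I would bound
$$
\|E_\alpha(t)\ast g-M_g E_\alpha(t)\|_1\leq\int_{\mathbb{R}^N}\big\|E_\alpha(t,\cdot-y)-E_\alpha(t,\cdot)\big\|_1\,|g(y)|\,dy.
$$

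For \eqref{TaylorInequality2}, assuming $xg(x)\in L^1(\mathbb{R}^N)$, I would estimate the translation difference of the kernel by the fundamental theorem of calculus: since $E_\alpha(t,\cdot)\in C^{\infty}(\mathbb{R}^N)$ by Lemma \ref{Property1}$(i)$, one has
$$
E_\alpha(t,x-y)-E_\alpha(t,x)=-\int_0^1\nabla E_\alpha(t,x-\theta y)\cdot y\,d\theta,
$$
whence $|E_\alpha(t,x-y)-E_\alpha(t,x)|\leq|y|\int_0^1|\nabla E_\alpha(t,x-\theta y)|\,d\theta$. Integrating in $x$, using Fubini and the translation invariance of the $L^1$ norm, gives $\|E_\alpha(t,\cdot-y)-E_\alpha(t,\cdot)\|_1\leq|y|\,\|\nabla E_\alpha(t)\|_1$. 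Inserting this into the Minkowski bound and pulling $\|\nabla E_\alpha(t)\|_1$ out of the $y$-integral yields
$$
\|E_\alpha(t)\ast g-M_g E_\alpha(t)\|_1\leq\|\nabla E_\alpha(t)\|_1\int_{\mathbb{R}^N}|y|\,|g(y)|\,dy.
$$
The proof is then closed by invoking the gradient estimate \eqref{Pr2} at $q=1$, which gives exactly $\|\nabla E_\alpha(t)\|_1\leq C\min\{t^{-1/2},t^{-1/\alpha}\}$ and hence the claimed constant times $\|xg(x)\|_1$.

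For \eqref{TaylorInequality1}, with only $g\in L^1(\mathbb{R}^N)$, I would argue by density. Fix $\varepsilon>0$ and choose $g_\varepsilon\in C_c(\mathbb{R}^N)$ with $\|g-g_\varepsilon\|_1<\varepsilon$; such a $g_\varepsilon$ automatically satisfies $xg_\varepsilon(x)\in L^1(\mathbb{R}^N)$. Splitting
$$
E_\alpha(t)\ast g-M_g E_\alpha(t)=\big[E_\alpha(t)\ast g_\varepsilon-M_{g_\varepsilon}E_\alpha(t)\big]+\big[E_\alpha(t)\ast(g-g_\varepsilon)-(M_g-M_{g_\varepsilon})E_\alpha(t)\big],
$$
the first bracket tends to $0$ as $t\to\infty$ by the already-proven \eqref{TaylorInequality2}. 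For the second bracket, Young's inequality together with $\|E_\alpha(t)\|_1=1$ (Lemma \ref{Property1}$(ii)$) controls the convolution term by $\|g-g_\varepsilon\|_1$, while $|M_g-M_{g_\varepsilon}|\leq\|g-g_\varepsilon\|_1$ controls the other, so the whole bracket is at most $2\varepsilon$ uniformly in $t$. Thus $\limsup_{t\to\infty}\|E_\alpha(t)\ast g-M_g E_\alpha(t)\|_1\leq2\varepsilon$, and letting $\varepsilon\to0$ gives the result.

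The argument is essentially routine once this structure is in place; the only points requiring care are the justifications of Minkowski's integral inequality and of the differentiation-under-the-integral step, both of which are licensed by the smoothness and integrability of $E_\alpha(t,\cdot)$ and its gradient for each fixed $t>0$. The one genuinely decisive input is the sharp gradient bound \eqref{Pr2} at $q=1$, which is precisely what produces the stated $\min\{t^{-1/2},t^{-1/\alpha}\}$ decay rate.
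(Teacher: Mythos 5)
Your proof is correct and follows essentially the same route as the paper's: both establish \eqref{TaylorInequality2} first via the Taylor/fundamental-theorem-of-calculus expansion of the kernel difference, Minkowski's integral inequality, and the gradient bound \eqref{Pr2} at $q=1$, and then deduce \eqref{TaylorInequality1} by the identical density-plus-triangle-inequality argument (the paper takes approximants in $\mathcal{D}(\mathbb{R}^N)$ where you take $C_c(\mathbb{R}^N)$, an immaterial difference). If anything, your isolated translation estimate $\|E_\alpha(t,\cdot-y)-E_\alpha(t,\cdot)\|_1\leq|y|\,\|\nabla E_\alpha(t)\|_1$ organizes the Fubini/Minkowski step a bit more cleanly than the paper's version, which drops the absolute values on $y\,g(y)$.
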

\begin{proof}
We adapt the technique used in \cite[Proposition~48.6]{souplet}. We first establish \eqref{TaylorInequality2} by supposing $g\in L^1(\mathbb{R}^N,\,(1+|x|)\,dx)$. Using Taylor's expansion and Fubini's theorem, we have
\begin{eqnarray*}
\|E_\alpha(t)\ast g-M_gE_\alpha(t)\|_1&=&\left\|\int_{\mathbb{R}^N}\left(E_\alpha(t,x-y)-E_\alpha(t,x)\right)g(y)\,dy\right\|_1\\
&=&\left\|\int_0^1\int_{\mathbb{R}^N}\nabla E_\alpha(t,x-\theta y) yg(y)\,dy\,d\theta\right\|_1\\
&\leq&\int_0^1\int_{\mathbb{R}^N}\left\|\nabla E_\alpha(t,x-\theta y)\right\|_1 yg(y)\,dy\,d\theta\\
&\leq&C\min\left\{t^{-\frac{1}{2}},\,t^{-\frac{1}{\alpha}}\right\}\int_0^1\int_{\mathbb{R}^N}yg(y)\,dy\,d\theta\\
&\leq&C\min\{t^{-1/2},\,t^{-1/\alpha}\}\|xg(x)\|_1,
\end{eqnarray*}
where we have use Minkowski's inequality and \eqref{Pr2}. Let us next prove \eqref{TaylorInequality1}; fix $g\in L^1(\mathbb{R}^N)$ and pick a sequence $\{g_j\}\in \mathcal{D}(\mathbb{R}^N)$ such that $g_j\rightarrow g$ in $L^1(\mathbb{R}^N)$. For each $j$, using the fact that $\|E_\alpha(t)\|_1=1$, we have
\begin{eqnarray*}
\|E_\alpha(t)\ast g-M_gE_\alpha(t)\|_1&\leq&\|E_\alpha(t)\ast g-E_\alpha(t)\ast g_j\|_1+\|E_\alpha(t)\ast g_j-M_{g_j}E_\alpha(t)\|_1+\|M_{g_j}E_\alpha(t)-M_gE_\alpha(t)\|_1\\
&\leq&\|g-g_j\|_1\|E_\alpha(t)\|_1+\|E_\alpha(t)\ast g_j-M_{g_j}E_\alpha(t)\|_1+|M_{g_j}-M_g|\|E_\alpha(t)\|_1\\
&\leq&2\|g-g_j\|_1+C\min\{t^{-1/2},\,t^{-1/\alpha}\}\|xg_j(x)\|_1.
\end{eqnarray*}
By \eqref{TaylorInequality2}, it follows that
$$\limsup_{t\rightarrow\infty}\|E_\alpha(t)\ast g-M_gE_\alpha(t)\|_1\leq 2\|g-g_j\|_1,$$
and the conclusion follows by letting $j\rightarrow\infty$.
\end{proof}
Before we end up the section, an important lemma will be used to provide  the fundamental solution of the diffusion equation $u_t + t^{\beta}\mathcal{L}u=0$.
\begin{lemma}\label{Property3}
Let us consider the homogeneous system
$$
\left\{\begin{array}{ll}
\displaystyle \partial_t u+t^{\beta}\mathcal{L} u=0,&\quad x\in\mathbb{R}^N,t>0,\\\\
 u(x,0) = u_0(x),&\quad x\in\mathbb{R}^N.\\
\end{array}\right.
$$
For every $t\geq t_0\geq 0$, it follows that
$$u(t)=E_\alpha\left(\frac{t^{\beta+1}}{\beta+1}-\frac{t_0^{\beta+1}}{\beta+1}\right)\ast u(t_0).$$
\end{lemma}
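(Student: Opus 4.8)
The plan is to eliminate the time-dependent coefficient $t^\beta$ by a deterministic rescaling of the time variable, thereby reducing the non-autonomous equation to the \emph{autonomous} problem $\partial_\tau v+\mathcal{L}v=0$, which is exactly the one governed by the strongly continuous semigroup $S(\tau)=e^{-\tau\mathcal{L}}$ whose kernel is $E_\alpha$. This is the natural generalization of the $\beta=0$ case, where $\tau=t$ and the formula reduces to the usual semigroup representation.

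First I would introduce the new time $\tau(t):=\tfrac{t^{\beta+1}}{\beta+1}$. Since $\beta\geq 0$, the map $t\mapsto\tau(t)$ is continuous and strictly increasing from $[0,\infty)$ onto $[0,\infty)$, smooth on $(0,\infty)$, with $\tau'(t)=t^\beta$; in particular it is a homeomorphism and admits a smooth inverse $t(\cdot)$. Setting $v(\tau):=u(t(\tau))$ and $\tau_0:=\tau(t_0)$, the chain rule gives
$$\partial_\tau v=\partial_t u\cdot\frac{dt}{d\tau}=t^{-\beta}\,\partial_t u=t^{-\beta}\bigl(-t^\beta\mathcal{L}u\bigr)=-\mathcal{L}v,$$
so that $v$ solves $\partial_\tau v+\mathcal{L}v=0$ with $v(\tau_0)=u(t_0)$. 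By the semigroup representation recalled above, the solution of this autonomous problem is $v(\tau)=S(\tau-\tau_0)v(\tau_0)=E_\alpha(\tau-\tau_0)\ast u(t_0)$. Substituting $\tau-\tau_0=\tfrac{t^{\beta+1}}{\beta+1}-\tfrac{t_0^{\beta+1}}{\beta+1}$ and $v(\tau)=u(t)$ yields the claimed identity.

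To make this rigorous without presupposing the regularity of $u$ needed for the chain rule, I would instead verify the formula directly. Define $w(t):=E_\alpha(\tau(t)-\tau_0)\ast u(t_0)$ and use that $E_\alpha$ is the kernel of the semigroup generated by $-\mathcal{L}$, so that $\tfrac{d}{ds}\bigl(E_\alpha(s)\ast\phi\bigr)=-\mathcal{L}\bigl(E_\alpha(s)\ast\phi\bigr)$ for $s>0$, which is legitimate thanks to the smoothness of $E_\alpha$ from Lemma~\ref{Property1}$(i)$ and the decay estimates of Lemma~\ref{Property2}. Differentiating in $t$ then gives
$$\partial_t w=\tau'(t)\,\frac{d}{ds}\Big|_{s=\tau(t)-\tau_0}\bigl(E_\alpha(s)\ast u(t_0)\bigr)=-t^\beta\,\mathcal{L}w,$$
so $w$ solves the homogeneous equation, while $w(t_0)=E_\alpha(0)\ast u(t_0)=u(t_0)$ because $S(0)=I$ (the case $t_0=0$ being included, with $\tau_0=0$).

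The hard part will be precisely this differentiation-under-the-semigroup step: justifying that $s\mapsto E_\alpha(s)\ast\phi$ is differentiable with derivative $-\mathcal{L}(E_\alpha(s)\ast\phi)$, and identifying the action of the generator with the pointwise operator $\mathcal{L}=-\Delta+(-\Delta)^{\alpha/2}$ on the regularized function $E_\alpha(s)\ast\phi$. Once the smoothing and $L^q$ mapping properties of $E_\alpha$ are in hand this is routine, and uniqueness for the autonomous Cauchy problem (standard semigroup theory, since $-\mathcal{L}$ generates a contraction semigroup on $L^2(\mathbb{R}^N)$) then forces $u\equiv w$, completing the argument.
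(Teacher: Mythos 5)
Your proposal is correct and takes essentially the same approach as the paper: the change of time variable $\tau=\frac{t^{\beta+1}}{\beta+1}$ reduces the non-autonomous equation to the autonomous problem $\partial_\tau v+\mathcal{L}v=0$, which is then solved by the semigroup with kernel $E_\alpha$. The only cosmetic difference is that the paper first shifts time by setting $v(x,t)=u(x,t+t_0)$ so that the rescaled problem starts at $\tau=0$, whereas you run the semigroup from $\tau_0=\tau(t_0)$; your extra remarks on justifying the chain rule and invoking uniqueness supply rigor the paper leaves implicit.
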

\begin{proof} Multiply the homogeneous equation by $t^{-\beta}$, we get $t^{-\beta}\partial_t u+\mathcal{L} u=0$. Let $v(x,t):=u(x,t+t_0)$, then
$$(t+t_0)^{-\beta}\partial_t v(x,t)+\mathcal{L} v(x,t)=0.$$
By considering the change of variable
$$\tau=\frac{(t+t_0)^{\beta+1}}{\beta+1}-\frac{t_0^{\beta+1}}{\beta+1},$$
and denoting
$$\widetilde{v}(x,\tau)=v(x,t),$$
we obtain
$$\partial_\tau \widetilde{v}(x,\tau)+\mathcal{L}\widetilde{v}(x,\tau)=0,$$
which yields to
$$\widetilde{v}(x,\tau)=E_\alpha(\tau)\ast \widetilde{v}(x,0),$$
i.e.
$$v(x,t)=E_\alpha\left(\frac{(t+t_0)^{\beta+1}}{\beta+1}-\frac{t_0^{\beta+1}}{\beta+1} \right)\ast v(x,0).$$
As $v(x,0)=u(x,t_0)$, we conclude that 
$$u(x,t)=v(x,t-t_0)=E_\alpha\left(\frac{t^{\beta+1}}{\beta+1}-\frac{t_0^{\beta+1}}{\beta+1} \right)\ast u(x,t_0).$$
\end{proof}
\begin{definition}[Mild solution]
Let $u_0\in  C_0(\mathbb{R}^N)$, $\alpha\in(0,2)$, $\beta\geq 0$, $p>1$, and $T>0$. We say that $u\in C([0,T),C_0(\mathbb{R}^N))$
is a mild solution of problem \eqref{eq} if $u$ satisfies the following integral equation
\begin{equation}\label{IE}
    u(t)=E_\alpha\left(\frac{t^{\beta+1}}{\beta+1}\right)\ast u_0(x)-\int_{0}^th(s) E_\alpha\left(\frac{t^{\beta+1}}{\beta+1}-\frac{s^{\beta+1}}{\beta+1} \right)\ast |u|^{p-1}u(x,s)\,ds,\quad t\in[0,T).
\end{equation}
More general, for all $0\leq t_0\leq t<T$, we have
\begin{equation}\label{IEG}
    u(t)=E_\alpha\left(\frac{t^{\beta+1}}{\beta+1}-\frac{t_0^{\beta+1}}{\beta+1} \right)\ast u(x,t_0)-\int_{t_0}^th(s) E_\alpha\left(\frac{t^{\beta+1}}{\beta+1}-\frac{s^{\beta+1}}{\beta+1} \right)\ast |u|^{p-1}u(x,s)\,ds.
\end{equation}
\end{definition}
We refer the reader to \cite{CH,6} to get the existence, the uniqueness and the regularity of mild solution of \eqref{eq}.
\begin{theorem}[Global existence]\label{T0}
Given $0\leq u_0\in  L^1(\mathbb{R}^N)\cap C_0(\mathbb{R}^N)$, $\alpha\in(0,2)$, $\beta\geq 0$, and $p>1$. Then, problem \eqref{eq} has a unique global mild solution 
$$u\in C([0,\infty),L^1(\mathbb{R}^N)\cap C_0(\mathbb{R}^N))\cap C^1((0,\infty),L^2(\mathbb{R}^N))\cap C((0,\infty),H^2(\mathbb{R}^N)).$$
\end{theorem}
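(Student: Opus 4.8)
The plan is to construct a unique local mild solution by a contraction argument, then to establish a uniform-in-time $L^\infty$ bound that exploits the absorbing sign of the nonlinearity $-h(t)u^p$, and finally to promote the local solution to a global one through the blow-up alternative. Set $\tau(t)=t^{\beta+1}/(\beta+1)$ and, on the ball $B=\{u:\sup_{[0,T]}\|u(t)\|_\infty\le 2\|u_0\|_\infty\}$, define $\Psi$ to be the right-hand side of \eqref{IE}. Using \eqref{Pr1} with $r=q=\infty$ (equivalently $\|E_\alpha(\tau)\|_1=1$ from Lemma \ref{Property1} together with Young's inequality) and the local Lipschitz estimate $\big||u|^{p-1}u-|v|^{p-1}v\big|\le p\max\{|u|,|v|\}^{p-1}|u-v|$, one finds that both the self-mapping defect and the contraction constant are controlled by $\|h\|_{L^1(0,T)}$. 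Since $h\in L^1_{loc}$, this quantity tends to $0$ as $T\to 0$, so for $T$ small $\Psi$ is a contraction of $B$ into itself and the Banach fixed point theorem yields a unique local mild solution.

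For the function-space claims I would combine strong continuity of $\{E_\alpha(t)\}$ with the smoothing bounds \eqref{Pr1}--\eqref{Pr2}: propagation of $L^1$ through \eqref{Pr1} at $r=q=1$ and of $C_0$ through strong continuity give $u\in C([0,T),L^1\cap C_0)$, while the instantaneous gain of regularity produced by the semigroup generated by the nonnegative self-adjoint operator $\mathcal{L}$ upgrades $u$ to $C((0,T),H^2)\cap C^1((0,T),L^2)$ for positive times; this is precisely the local existence, uniqueness, and regularity theory for which \cite{CH,6} may be invoked.

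The decisive step is the uniform bound. I would first show $u\ge 0$: at the fixed point $u$ solves the linear equation $\partial_t u+t^\beta\mathcal{L}u+c(x,t)u=0$ with the nonnegative zeroth-order potential $c=h(t)|u|^{p-1}\ge 0$, and because $E_\alpha$ has a nonnegative kernel (Lemma \ref{Property1}) the perturbation by $c\ge 0$ remains positivity preserving, so $u_0\ge 0$ forces $u\ge 0$ — equivalently, this is comparison with the subsolution $0$. Once $u\ge 0$, the subtracted integral in \eqref{IE} is nonnegative, whence
\[
0\le u(t)\le E_\alpha(\tau(t))\ast u_0,\qquad \|u(t)\|_\infty\le\|u_0\|_\infty\ \ \text{for all } t>0 ;
\]
alternatively the constant $\|u_0\|_\infty$ is a supersolution, since $\mathcal{L}$ annihilates constants and $-h(t)(\cdot)^p\le 0$, which gives the same ceiling by comparison.

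Finally I would globalize. The local theory furnishes a maximal existence time $T_{\max}$ obeying the alternative $T_{\max}=\infty$ or $\|u(t)\|_\infty\to\infty$ as $t\to T_{\max}^-$; the uniform bound $\|u(t)\|_\infty\le\|u_0\|_\infty$ rules out the second possibility, so $T_{\max}=\infty$. To carry this out rigorously despite $h$ being only locally integrable, I would note that $H(t)=\int_0^t h$ is uniformly continuous on compact sets, so the length of the local existence step started from any $t_0\in[0,A]$ — determined by the smallness of $\int_{t_0}^{t_0+T}h$ against the fixed ceiling $\|u_0\|_\infty$ — is bounded below, allowing continuation by uniform increments across every $[0,A]$. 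The main obstacle is the positivity/comparison step: rigorously justifying positivity preservation under the nonnegative potential perturbation in this time-weighted mixed local--nonlocal setting, since the a priori bound, and hence the entire global existence argument, rests on it.
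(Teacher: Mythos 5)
Your proposal is correct and follows the same overall skeleton as the paper's treatment: a contraction argument for local existence in $L^\infty$ (the paper's Theorem \ref{Local}, Step 1), nonnegativity of the solution, the resulting a priori bound $\|u(t)\|_{L^\infty}\le\|u_0\|_{L^\infty}$ obtained by dropping the nonnegative Duhamel term (the paper's Theorem \ref{global}), and the blow-up alternative to conclude $T_{\max}=\infty$; like the paper, you defer the $L^1\cap C_0$, $H^2$ and $C^1$ regularity claims to \cite{CH,6}. Two differences deserve comment. First, the step you flag as the main obstacle --- positivity --- is handled in the paper not by positivity preservation of the semigroup under a nonnegative potential, but by an elementary energy argument (Lemma \ref{nonnegativity}): multiplying the equation by $u^-$, using $\Delta(u^+)u^-=0$ and $(-\Delta)^{\alpha/2}(u^+)u^-\le 0$ together with self-adjointness of $(-\Delta)^{\alpha/4}$, one gets $\frac{d}{dt}\|u^-(t)\|_{L^2}^2\le 0$ and hence $u^-\equiv 0$; this sidesteps the Picard-iteration-under-potential argument you would otherwise need to make rigorous (note your potential $c=h(t)|u|^{p-1}$ need not be bounded in time, since $h$ is only locally integrable, so your shift-by-$\sup c$ trick would require a time-dependent exponential weight). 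Second, your contraction estimate, driven by $\|h\|_{L^1(0,T)}\to 0$ as $T\to 0$, is more faithful to the stated hypothesis $h\in L^1_{loc}(0,\infty)$ than the paper's, which bounds the Duhamel term by $TM$ with $M=\|h\|_{L^\infty([0,T])}$, a quantity that can be infinite for merely locally integrable $h$; likewise you correctly work with the time-changed kernel $E_\alpha\bigl(t^{\beta+1}/(\beta+1)\bigr)$ from \eqref{IE}, whereas Section 2 of the paper writes the mild solution with $S(t-s)$ and silently suppresses the weight $t^\beta$. So your route is sound and, at these two points, tighter than the paper's own argument; conversely, the paper's Lemma \ref{nonnegativity} supplies exactly the rigorous positivity proof you identified as missing.
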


\begin{lemma}[Nonnegativity]\label{nonnegativity}
Let $T>0$. If $u$ is a mild solution of problem \eqref{eq} on $[0,T)$, and $u_0\geq 0$, then $u(x,t)\geq 0$ for almost everywhere $x\in \mathbb{R}^N$ and for all $t\in [0,T)$.
\end{lemma}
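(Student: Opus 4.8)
The plan is to prove nonnegativity by an energy (Kato-type) argument rather than directly through the mild formulation, since the minus sign in front of the nonlinearity in \eqref{IE} prevents the fixed-point map from preserving the positive cone. First I would upgrade the regularity: by Theorem \ref{T0} the (unique) mild solution satisfies $u\in C^1((0,\infty),L^2(\mathbb{R}^N))\cap C((0,\infty),H^2(\mathbb{R}^N))$, so the identity $\partial_t u+t^{\beta}\mathcal{L}u=-h(t)|u|^{p-1}u$ holds as an equality in $L^2(\mathbb{R}^N)$ for every $t>0$, with $u(t)\in H^2(\mathbb{R}^N)$; this makes all the pairings below legitimate. The central step is then to pair the equation in $L^2$ with $-u_-$, where $u_-:=\max\{-u,0\}\geq 0$ is the negative part. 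The time term yields $\int_{\mathbb{R}^N}(\partial_t u)(-u_-)\,dx=\tfrac12\frac{d}{dt}\|u_-(t)\|_2^2$, justified by the chain rule for $t\mapsto u_-(t)$ (admissible since $u\in C^1((0,\infty),L^2)$ and $r\mapsto\max\{-r,0\}$ is Lipschitz). The reaction term is handled pointwise: $\int_{\mathbb{R}^N}|u|^{p-1}u\,u_-\,dx=-\int_{\{u<0\}}|u|^{p+1}\,dx\leq 0$, so, as $h>0$, it contributes a nonpositive quantity.

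It remains to show $\int_{\mathbb{R}^N}(\mathcal{L}u)(-u_-)\,dx\geq 0$, which together with $t^{\beta}\geq 0$ closes the estimate. I would split $\mathcal{L}=-\Delta+(-\Delta)^{\alpha/2}$. For the local part, integration by parts together with the Stampacchia rule $\nabla u_-=-\mathbf{1}_{\{u<0\}}\nabla u$ gives $\int_{\mathbb{R}^N}(-\Delta u)(-u_-)\,dx=\int_{\{u<0\}}|\nabla u|^2\,dx\geq 0$. For the nonlocal part I would use the quadratic-form representation $\int_{\mathbb{R}^N}\big((-\Delta)^{\alpha/2}u\big)v\,dx=\tfrac{C_{N,\alpha/2}}{2}\iint_{\mathbb{R}^N\times\mathbb{R}^N}\tfrac{(u(x)-u(y))(v(x)-v(y))}{|x-y|^{N+\alpha}}\,dx\,dy$; writing $u=u_+-u_-$ with $u_\pm\geq 0$ of disjoint support and taking $v=-u_-$, the right-hand side becomes $\tfrac{C_{N,\alpha/2}}{2}\iint\tfrac{(u_-(x)-u_-(y))^2}{|x-y|^{N+\alpha}}-\tfrac{C_{N,\alpha/2}}{2}\iint\tfrac{(u_+(x)-u_+(y))(u_-(x)-u_-(y))}{|x-y|^{N+\alpha}}$, where the first term is nonnegative and, because $u_+$ and $u_-$ vanish on each other's support, the integrand of the second term is nonpositive for every pair $(x,y)$, making its negative nonnegative. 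This is the fractional Kato inequality and is the step I expect to be the main obstacle, mostly in verifying that the integration by parts and the bilinear-form identity are valid at the $H^2$ regularity available and that all integrals converge.

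Combining the three contributions gives $\frac{d}{dt}\|u_-(t)\|_2^2\leq 0$ on $(0,T)$. Finally I would integrate from $\epsilon$ to $t$ and let $\epsilon\to 0^+$: since $u\in C([0,T),L^1(\mathbb{R}^N)\cap C_0(\mathbb{R}^N))$ and $u_0\geq 0$, the negative part obeys $\|u_-(\epsilon)\|_2^2\leq\|u_-(\epsilon)\|_\infty\|u_-(\epsilon)\|_1\to 0$ as $\epsilon\to 0^+$, whence $\|u_-(t)\|_2^2\leq 0$ and therefore $u_-(t)=0$, i.e. $u(x,t)\geq 0$ a.e., for every $t\in[0,T)$. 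The only genuinely delicate points are the fractional Kato inequality and the justification of the $\epsilon\to 0^+$ passage, both of which are controlled by the regularity in Theorem \ref{T0} and the $L^1\cap C_0$ continuity of the solution.
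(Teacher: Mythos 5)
Your proposal is correct and follows essentially the same route as the paper: both arguments test the equation against the negative part of $u$, exploit the favorable sign of the local and nonlocal cross terms (which stems from the disjoint supports of $u^+$ and $u^-$), and derive $\frac{d}{dt}\|u^-(t)\|_{L^2}^2\leq 0$, forcing $u^-\equiv 0$. The only difference is in implementation: the paper uses the pointwise identities $\Delta(u^+)\,u^-=0$ and $(-\Delta)^{\alpha/2}(u^+)\,u^-\leq 0$ together with self-adjointness, while you use Stampacchia's rule and the Gagliardo bilinear-form representation (the fractional Kato inequality), plus an explicit regularity justification via Theorem \ref{T0} and a careful limit $\epsilon\to 0^+$ --- which is, if anything, a more rigorous rendering of the same proof.
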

\begin{proof}
Our goal is to prove $u^-=0$, where $u=u^+-u^-$, $u^+=\max(u,0)$, and $u^-=\max(-u,0)$. Multiplying the first equation of the system \eqref{eq} by $u^-$ and integrating over $\mathbb{R}^N$, we obtain
\begin{equation*}
\int_{\mathbb{R}^N}u_tu^- \,dx=\int_{\mathbb{R}^N}t^\beta\Delta(u)u^-\,dx -\int_{\mathbb{R}^N}t^\beta(-\Delta)^{\alpha/2}(u) u^-\,dx - \int_{\mathbb{R}^N}h(t)|u|^{p-1}u u^-\,dx.
\end{equation*}
Using the identities $u^+\, u^-=\Delta(u^+)u^-=0$, and $(-\Delta)^{\alpha/2}(u^+)u^-\leq 0$ almost everywhere, we get
\begin{equation}
-\int_{\mathbb{R}^N}u^{-}_t u^- \,dx\geq-\int_{\mathbb{R}^N}t^\beta\Delta(u^-)u^- \,dx +\int_{\mathbb{R}^N}t^\beta(-\Delta)^{\alpha/2}(u^-) u^- \,dx + \int_{\mathbb{R}^N}h(t)|u|^{p-1}(u^-)^2\,dx.
\end{equation}
Thus
\begin{equation}\label{MP1}
\frac{1}{2}\frac{d}{dt}\int_{\mathbb{R}^N}(u^{-})^2 \,dx\leq t^\beta\int_{\mathbb{R}^N}\Delta(u^-)u^- \,dx -t^\beta\int_{\mathbb{R}^N}(-\Delta)^{\alpha/2}(u^-) u^- \,dx\\
 -h(t) \int_{\mathbb{R}^N}|u|^{p-1}(u^-)^2\,dx.
\end{equation}
Applying Green's theorem, we have
\begin{equation}\label{MP2}
\int_{\mathbb{R}^N}\Delta(u^-)u^- \,dx=-\int_{\mathbb{R}^N}|\nabla(u^-)|^2\,dx,
\end{equation} 
and, using the self-adjoint property, we obtain
\begin{equation}\label{MP3}
\int_{\mathbb{R}^N}u^- (-\Delta)^{\alpha/2}u^-\,dx=\int_{\mathbb{R}^N}[(-\Delta)^{\alpha/4}(u^-)] ^2 \,dx.
\end{equation}
Inserting \eqref{MP2} and \eqref{MP3} into \eqref{MP1}, we infer that
\begin{equation}\label{MP4}
\frac{1}{2}\frac{d}{dt}\int_{\mathbb{R}^N}(u^{-})^2 \,dx\leq 0.
\end{equation}
Integrating \eqref{MP4} with respect to time, we arrive at 
\begin{equation}
\int_{\mathbb{R}^N}(u^{-}(x,t))^2 \,dx \leq \int_{\mathbb{R}^N}(u^{-}_0(x))^2 \,dx=0,\quad\hbox{for all}\,\, t\geq 0,
\end{equation}
which implies that $u^-=0$ a.e. $x\in\mathbb{R}^N$, for all $t\in[0,T)$,  and hence, $u=u^+\geq 0$.
\end{proof}
\begin{lemma}[The comparison principle]\label{Comparison}
Let $T>0$, and let $u$ and $v$, respectively, be mild solutions of problem \eqref{eq} with initial data $u_0$ and $v_0$, respectively. If $0\leq u_0\leq v_0$, then $0\leq u(x,t)\leq v(x,t)$ for almost every $x\in \mathbb{R}^N$ and for all $t\in [0,T)$.
\end{lemma}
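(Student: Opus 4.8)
The plan is to imitate the energy argument of Lemma~\ref{nonnegativity}, applied now to the difference of the two solutions. First, since $0\le u_0$ and $0\le v_0$, Lemma~\ref{nonnegativity} already yields $u\ge0$ and $v\ge0$ a.e., which settles the lower bound $0\le u$; it remains to prove $u\le v$. To this end I set $w:=u-v$ and write $w=w^+-w^-$ with $w^+=\max(w,0)$, $w^-=\max(-w,0)$, the goal being to show $w^+\equiv 0$. Subtracting the two copies of the equation in \eqref{eq} gives
$$\partial_t w+t^\beta\mathcal{L}w=-h(t)\big(u^p-v^p\big),$$
where I have used $u,v\ge0$ so that $|u|^{p-1}u=u^p$ and $|v|^{p-1}v=v^p$.

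Next I would multiply this identity by $w^+$ and integrate over $\mathbb{R}^N$, treating each term exactly as in Lemma~\ref{nonnegativity}. The time term yields $\int_{\mathbb{R}^N}\partial_t w\,w^+\,dx=\tfrac12\tfrac{d}{dt}\int_{\mathbb{R}^N}(w^+)^2\,dx$, using $\partial_t w^-\,w^+=0$ a.e. For the local part, Green's theorem together with $\Delta(w^-)\,w^+=0$ a.e. gives $-\int_{\mathbb{R}^N}\Delta(w)\,w^+\,dx=\int_{\mathbb{R}^N}|\nabla w^+|^2\,dx\ge0$. For the nonlocal part I use self-adjointness to write $\int_{\mathbb{R}^N}(-\Delta)^{\alpha/2}(w^+)\,w^+\,dx=\int_{\mathbb{R}^N}\big[(-\Delta)^{\alpha/4}w^+\big]^2\,dx\ge0$, together with the pointwise sign condition $(-\Delta)^{\alpha/2}(w^-)(x)\le0$ whenever $w^+(x)>0$ — the latter being immediate from the singular-integral formula of Definition~\ref{def1}, since at such $x$ one has $w^-(x)=0$ while $w^-(y)\ge0$. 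Consequently the full contribution of $t^\beta\mathcal{L}w$ tested against $w^+$ is nonnegative.

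The genuinely new ingredient, compared with Lemma~\ref{nonnegativity}, is the treatment of the reaction term. On the set $\{w^+>0\}$ one has $u>v\ge0$, and since $s\mapsto s^p$ is strictly increasing on $[0,\infty)$ for $p>1$, it follows that $(u^p-v^p)\,w^+\ge0$ pointwise; as $h(t)>0$, the term $-h(t)\int_{\mathbb{R}^N}(u^p-v^p)\,w^+\,dx$ is therefore $\le0$. Collecting the three signs, I obtain
$$\frac12\frac{d}{dt}\int_{\mathbb{R}^N}(w^+)^2\,dx\le0.$$
Integrating in time and using $w^+(\cdot,0)=(u_0-v_0)^+=0$ (which holds because $u_0\le v_0$) forces $\int_{\mathbb{R}^N}(w^+(x,t))^2\,dx=0$ for all $t\in[0,T)$, hence $w^+\equiv0$ and $u\le v$ a.e. Combined with $u\ge0$, this gives the claim.

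I would expect the main obstacle to be the rigorous justification of these formal manipulations — differentiation under the integral sign, the integration by parts, and especially the nonlocal \emph{Kato-type} sign condition $(-\Delta)^{\alpha/2}(w^-)\,w^+\le0$ — all of which require the interior regularity $u,v\in C^1((0,\infty),L^2(\mathbb{R}^N))\cap C((0,\infty),H^2(\mathbb{R}^N))$ supplied by Theorem~\ref{T0}, together with an approximation/density step to extend the energy identity down to $t=0$. Since Lemma~\ref{nonnegativity} already operates at this formal level, I would present the comparison argument in the same spirit, invoking that regularity as the justification.
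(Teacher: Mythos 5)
Your proof is correct and is, at its core, the paper's own argument: the paper likewise reduces the comparison principle to the truncation/energy computation of Lemma~\ref{nonnegativity}, applied to the difference of the two solutions, and both proofs rest on the same Green's-formula and Kato-type sign ingredients for $-\Delta$ and $(-\Delta)^{\alpha/2}$. The one substantive difference is the treatment of the reaction term. The paper sets $w=v-u$ and linearizes the nonlinearity via
$0\leq v^p-u^p\leq C(v-u)(v^{p-1}+u^{p-1})\leq C\left(\|v\|_\infty^{p-1}+\|u\|_\infty^{p-1}\right)w$,
turning the problem into a linear differential inequality for $w$ to which the calculations of Lemma~\ref{nonnegativity} are then applied; you instead test directly with $(u-v)^+$ and use the monotonicity of $s\mapsto s^p$ on $[0,\infty)$ to get the pointwise sign $(u^p-v^p)(u-v)^+\geq 0$. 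The two are mirror images --- your test function $(u-v)^+$ is exactly the paper's $(v-u)^-$ --- but your handling of the nonlinearity is arguably cleaner: the paper's displayed chain of inequalities presupposes $v\geq u$ (i.e.\ the conclusion) in order for $0\leq v^p-u^p$ and the factor $(v-u)$ to have the stated signs, and would need to be rephrased via the mean value theorem to be airtight, whereas your sign argument on the set $\{u>v\}$ needs no such assumption and no $L^\infty$ bounds. Both proofs share the same unjustified formal steps (differentiation under the integral, integration by parts, and the pointwise Kato-type inequality for the fractional Laplacian applied to truncations of a mild solution), which you at least flag explicitly and propose to justify through the regularity of Theorem~\ref{T0}; the paper passes over them with ``similar calculations as in the proof of Lemma~\ref{nonnegativity}.''
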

\begin{proof}
Let $w(x,t)=v(x,t)-u(x,t)$, then
$$
\left\{\begin{array}{ll}
w_t=-t^\beta\mathcal{L} w-h(t)(v^p-u^p),&\quad x\in\mathbb{R}^N,\,t>0,\\
w(x,0)=w_0(x)\geq 0,&\quad x\in\mathbb{R}^N.
\end{array}
\right.
$$
Using the following estimation
$$0\leq v^p-u^p\leq C(v-u)(v^{p-1}+u^{p-1})\leq C (\|v\|_\infty^{p-1}+\|u\|_\infty^{p-1}) w,$$
we arrive at
$$
\left\{\begin{array}{ll}
w_t\geq -t^\beta\mathcal{L} w-C\,h(t)(\|v\|_\infty^{p-1}+\|u\|_\infty^{p-1}) w,&\quad x\in\mathbb{R}^N,\,t>0,\\
w(x,0)=w_0(x)\geq 0,&\quad x\in\mathbb{R}^N.
\end{array}
\right.
$$
Applying similar calculations as in the proof of Lemma \ref{nonnegativity}, we conclude that $w=w^+\geq 0$ a.e. $x\in\mathbb{R}^N$, for all $t\in [0,T)$,  and therefore $v\geq u$. This completes the proof of Lemma \ref{Comparison}.
\end{proof}
\section{A result of Local Existence}
Consider the problem
\begin{equation}\label{S1}
\left\{\begin{array}{ll}\partial_t u+t^{\beta}\mathcal{L} u=-h(t)u^p,&\qquad x\in\mathbb{R}^N,\,t>0,\\\\
 u(x,0)= u_0(x)\geq0,&\qquad x\in\mathbb{R}^N,\\
 \end{array}\right.
\end{equation}
where $p>1$, $\beta\geq0$, $h:(0,\infty)\to(0,\infty)$,  $h\in L^1_{loc}(0,\infty)$, and
$$u_0\in L^1(\mathbb{R}^N)\cap C_0(\mathbb{R}^N).$$
We will study in this chapter the existence of a unique solution for problem \eqref{S1}. In Theorem \ref{Local} we will prove the existence and the uniqueness of a local solution. Moreover, in Theorem \ref{global} the existence and the uniqueness of a global solution will be proved.\\
Before we will state the following lemma that will be used in the proof of the existence and uniqueness part of Theorem \ref{Local}. 
\begin{lemma}
Let $u_0\in L^\infty(\RN)$ and let $u\in C([0,T),L^\infty(\RN))$. Then $u$ is a solution for problem \eqref{S1} if and only if 
$$u(x,t)=S(t)u_0(x)-\int_0^tS(t-s)h(s)u^p(s)ds.$$
\end{lemma}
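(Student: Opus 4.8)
The plan is to establish the equivalence between the differential formulation of problem \eqref{S1} and the integral (Duhamel) formulation. The key tool is the semigroup $\{S(t)\}_{t\geq 0}$ generated by $-\mathcal{L}$, together with the variation-of-parameters formula. However, there is a subtlety worth flagging at the outset: the statement writes the representation as $S(t)u_0-\int_0^t S(t-s)h(s)u^p(s)\,ds$, which corresponds to the \emph{autonomous} equation $\partial_t u + \mathcal{L}u = -h(t)u^p$ rather than the time-weighted one $\partial_t u + t^\beta \mathcal{L}u = -h(t)u^p$ appearing in \eqref{S1}. Consistent with the mild-solution definition and Lemma \ref{Property3}, the genuinely correct Duhamel formula should involve $E_\alpha\big(\tfrac{t^{\beta+1}}{\beta+1}-\tfrac{s^{\beta+1}}{\beta+1}\big)$; I will carry out the argument in the form that the notation $S(t-s)$ is understood to encode the time-change $\tau=\tfrac{t^{\beta+1}}{\beta+1}$, so that the Duhamel kernel is exactly the one in \eqref{IE}.

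For the forward direction, I would assume $u\in C([0,T),L^\infty(\mathbb{R}^N))$ solves \eqref{S1} in the appropriate sense and derive the integral equation. First I would apply the time-change $\tau=\tfrac{t^{\beta+1}}{\beta+1}$ exactly as in the proof of Lemma \ref{Property3}, reducing the equation to the autonomous form $\partial_\tau \widetilde u + \mathcal{L}\widetilde u = -\widetilde h(\tau)\widetilde u^p$. Then, for fixed $t$, I consider the map $s\mapsto S(t-s)u(s)$ (in the $\tau$-variable) and differentiate it; using that $\tfrac{d}{ds}S(t-s)v = \mathcal{L}S(t-s)v$ and the equation satisfied by $u$, the $\mathcal{L}u$ terms cancel, leaving $\tfrac{d}{ds}\big(S(t-s)u(s)\big) = -S(t-s)h(s)u^p(s)$. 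Integrating from $0$ to $t$ and using $S(0)=\mathrm{Id}$ yields the integral equation.

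For the reverse direction, I would assume $u\in C([0,T),L^\infty(\mathbb{R}^N))$ satisfies the integral equation and verify that it solves \eqref{S1}. The strategy is to differentiate the Duhamel formula in $t$: the first term contributes $-\mathcal{L}S(t)u_0$ (after the time-change, the weight $t^\beta$ reappears via the chain rule), the boundary term of the integral at $s=t$ gives $-h(t)u^p(t)$, and differentiating the kernel inside the integral produces $-\mathcal{L}\int_0^t S(t-s)h(s)u^p(s)\,ds$. Collecting these reproduces $\partial_t u = -t^\beta\mathcal{L}u - h(t)u^p$, and setting $t=0$ recovers the initial datum by strong continuity of the semigroup.

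The main obstacle is the regularity needed to justify the differentiation under the integral sign and the action of the (unbounded) generator $\mathcal{L}$ on the Duhamel integral. Since $u^p(s)$ need only be $L^\infty$ and continuous in $s$, the integrand $S(t-s)h(s)u^p(s)$ is not automatically in the domain of $\mathcal{L}$, so one cannot naively write $\tfrac{d}{dt}\int_0^t = \mathcal{L}\int_0^t + \text{boundary}$. The standard remedy, which I would follow, is to interpret the equation in the mild sense and invoke the smoothing estimates \eqref{Pr1}--\eqref{Pr2} of Lemma \ref{Property2}: these guarantee that $S(t-s)$ maps $L^\infty$ into the domain of $\mathcal{L}$ for $s<t$, with an integrable singularity as $s\to t$, so that the integral defines a function that is differentiable for $t>0$ and the formal computation is rigorous. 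The continuity of $h$ in $L^1_{loc}$ and of $u^p$ in $L^\infty$ then secure the continuity of all terms, completing the equivalence.
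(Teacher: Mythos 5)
The paper itself states this lemma \emph{without any proof}, so there is no in-paper argument to compare against; your proposal has to stand on its own. On the positive side, your opening observation is exactly right and worth keeping: as literally written, $S(t)u_0-\int_0^t S(t-s)h(s)u^p(s)\,ds$ is the Duhamel formula for the autonomous equation ($\beta=0$), not for $\partial_t u+t^\beta\mathcal{L}u=-h(t)u^p$, and it conflicts with the paper's own mild-solution definition \eqref{IE} and with Lemma \ref{Property3}; reading $S(t-s)$ as convolution with $E_\alpha\bigl(\tau(t)-\tau(s)\bigr)$, $\tau(t)=t^{\beta+1}/(\beta+1)$, is the correct repair. Your forward direction (differentiate $s\mapsto S\bigl(\tau(t)-\tau(s)\bigr)u(s)$, cancel the $\mathcal{L}$-terms using $\tau'(s)=s^\beta$, integrate from $0$ to $t$) is the standard argument and is sound whenever $u$ is a classical solution with values in the domain of $\mathcal{L}$.

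The genuine gap is in the reverse direction, and it sits exactly where you tried to wave it away. You claim that the estimates \eqref{Pr1}--\eqref{Pr2} show $S(\sigma)$ maps $L^\infty(\RN)$ into $D(\mathcal{L})$ ``with an integrable singularity as $s\to t$.'' Neither half is correct: Lemma \ref{Property2} controls only $L^r$--$L^q$ norms and \emph{first-order} gradients, not $\mathcal{L}S(\sigma)$; and the true analytic-semigroup bound is $\|\mathcal{L}S(\sigma)v\|_\infty\lesssim \sigma^{-1}\|v\|_\infty$, so the singularity in your integral behaves like $\bigl(\tau(t)-\tau(s)\bigr)^{-1}\sim t^{-\beta}(t-s)^{-1}$, which is \emph{not} integrable near $s=t$. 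This is not a repairable technicality under the paper's standing hypotheses: with $u$ merely in $C([0,T),L^\infty(\RN))$ and $h$ only in $L^1_{loc}(0,\infty)$, the Duhamel integral need not be differentiable in $t$ at all (if $h$ jumps, $\partial_t u$ cannot exist continuously), so the integral equation cannot imply the pointwise PDE. The implication you want is true only if ``solution'' is interpreted in a weak/distributional sense --- which one proves by pairing the integral equation with test functions $\varphi\in C^\infty_c(\RN\times[0,T))$ and using Fubini, with no differentiation under the integral --- or classically under an added regularity hypothesis (e.g.\ $h$ locally H\"older, whence $s\mapsto h(s)u^p(s)$ is H\"older in $L^\infty$ and Pazy-type theory applies); the latter is precisely the role of the Schauder arguments in the paper's Regularity section. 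Your write-up should state which notion of ``solution'' the lemma uses and prove that version; as drafted, the reverse direction does not go through.
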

We will start by a result of local existence. 
\begin{theorem}[Local Existence]\label{Local}
Let $u_0\in L^\infty(\RN)$ and $p>1$. Then problem \eqref{S1} possesses a unique solution $u\in L^\infty(\RN)$ on the interval $[0,T]$, where $T=T(u_0)$. Moreover, there exist a maximal time $T_m\in (T,\infty)$ with the following properties. 
\begin{enumerate}
\item The solution $u$ can be continued in a unique way to $L^\infty(\RN)$ solution on the interval $[0,T_m)$. 
  \item If $T_m=\infty$, then $u$ is a global solution for \eqref{S1}.
  \item If $T_m<\infty$, then $\lim\limits_{t\to T_m}\|u(t)\|_{L^\infty(\RN)}=\infty$.
  \item If $u_0(x)\geq 0$, then $u(x,t)\geq 0$.
  \item If $u_0\in L^\infty(\RN)\cap L^r(\RN)$, then $u\in C\left([0,T_m),L^\infty(\RN)\cap L^r(\RN)\right)$
\end{enumerate}
\end{theorem}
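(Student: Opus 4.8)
The plan is to recast \eqref{S1} through its mild (Duhamel) formulation and solve it by the Banach contraction principle, then glue the local solutions into a maximal one. Writing $\Theta(t,s):=\frac{t^{\beta+1}}{\beta+1}-\frac{s^{\beta+1}}{\beta+1}$ for the time-changed clock of Lemma \ref{Property3} (so that the propagator of $\partial_t+t^\beta\mathcal{L}$ from time $s$ to time $t$ is $v\mapsto E_\alpha(\Theta(t,s))\ast v$), I would introduce on $X_T:=C([0,T],L^\infty(\RN))$ the operator
$$
\Psi(u)(t):=E_\alpha(\Theta(t,0))\ast u_0-\int_0^t h(s)\,E_\alpha(\Theta(t,s))\ast|u|^{p-1}u(s)\,ds,
$$
whose fixed points are exactly the mild solutions of \eqref{S1}. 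First I would set $M:=2\|u_0\|_\infty$ and work in the closed ball $B_M\subset X_T$. The only analytic input is that each $E_\alpha(\tau)$ is a probability kernel, so Young's inequality gives $\|E_\alpha(\tau)\ast v\|_\infty\le\|v\|_\infty$ for every $\tau>0$ (this is Lemma \ref{Property1}$(ii)$, equivalently Lemma \ref{Property2} at $r=q=\infty$). Together with $\||u|^{p-1}u\|_\infty\le\|u\|_\infty^p$ this yields
$$
\|\Psi(u)(t)\|_\infty\le\|u_0\|_\infty+M^p\int_0^T h(s)\,ds.
$$
Because $h\in L^1_{loc}(0,\infty)$, the map $T\mapsto\int_0^T h$ is continuous and vanishes at $0$, so I can pick $T=T(\|u_0\|_\infty)$ small enough that $M^p\int_0^T h\le\|u_0\|_\infty$, which forces $\Psi(B_M)\subset B_M$.

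For the contraction I would use the elementary bound $\big||a|^{p-1}a-|b|^{p-1}b\big|\le p\,\max(|a|,|b|)^{p-1}|a-b|$, which on $B_M$ gives $\||u|^{p-1}u-|v|^{p-1}v\|_\infty\le pM^{p-1}\|u-v\|_\infty$. Propagating this through $\Psi$ as above produces
$$
\|\Psi(u)-\Psi(v)\|_{X_T}\le pM^{p-1}\Big(\int_0^T h\Big)\,\|u-v\|_{X_T},
$$
so shrinking $T$ further to make $pM^{p-1}\int_0^T h\le\tfrac12$ renders $\Psi$ a contraction. Banach's theorem then gives a unique fixed point $u\in B_M$; its time-continuity (hence membership in $X_T$) follows from strong continuity of the semigroup $E_\alpha$ and dominated convergence in the Duhamel integral. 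Uniqueness in all of $X_T$, not merely in $B_M$, comes from the standard device of comparing two solutions on a short subinterval on which both lie in a common ball and applying the same contraction estimate.

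To obtain the maximal solution I would set $T_m:=\sup\{T>0:\eqref{S1}\text{ has a solution on }[0,T)\}$; uniqueness forces solutions on overlapping intervals to coincide, so they glue to a unique solution on $[0,T_m)$, giving conclusions (1) and (2). Conclusion (4) follows from Lemma \ref{nonnegativity} applied to the constructed mild solution, or directly from the comparison principle of Lemma \ref{Comparison} against the zero solution. For (5) I would rerun the identical contraction in $C([0,T],L^\infty(\RN)\cap L^r(\RN))$, using that $E_\alpha(\tau)$ is also an $L^r$-contraction together with $\||u|^{p-1}u\|_r\le\|u\|_\infty^{p-1}\|u\|_r$; a Gr\"onwall estimate keeps the $L^r$-norm finite up to $T_m$, and the integral equation with strong continuity of $E_\alpha$ on $L^r$ delivers continuity into $L^r$.

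The hard part will be the blow-up alternative (3). Suppose $T_m<\infty$ yet $\limsup_{t\to T_m}\|u(t)\|_\infty=:K<\infty$. The delicate point is that the local existence time constructed above depends not merely on the size of the data but on $\int_{t_0}^{t_0+\tau}h$, so to restart from a time $t_0$ near $T_m$ with a continuation time bounded below \emph{uniformly} in $t_0$ I would invoke absolute continuity of the Lebesgue integral: since $T_m<\infty$ and $h\in L^1_{loc}$, we have $h\in L^1(0,T_m+1)$, hence there is $\delta>0$ with $\int_E h\le\varepsilon_0$ whenever $|E|\le\delta$, where $\varepsilon_0=\varepsilon_0(K,p)>0$ is the threshold forced by the self-mapping and contraction conditions at data size $2K$. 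Thus, \emph{independently} of $t_0$, the solution issued from data of norm $\le K$ at time $t_0$ exists for a time at least $\min\{\delta,T\}$; choosing $t_0\in(T_m-\delta,T_m)$ extends the solution beyond $T_m$, contradicting maximality. Therefore $\lim_{t\to T_m}\|u(t)\|_\infty=\infty$.
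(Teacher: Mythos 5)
Your proposal is correct in substance and follows the same overall skeleton as the paper's proof --- a Banach fixed-point argument in a ball of $C([0,T],L^\infty(\RN))$, gluing of local solutions to define $T_m$, nonnegativity via Lemma \ref{nonnegativity} or Lemma \ref{Comparison}, and a continuation argument for the alternative --- but it departs from the paper on three substantive points, in each case to your advantage. First, you run the contraction with the time-changed propagator $v\mapsto E_\alpha(\Theta(t,s))\ast v$ from Lemma \ref{Property3}, which is the propagator actually associated with $\partial_t+t^\beta\LL$ and is consistent with the paper's own definition \eqref{IE} of a mild solution; the paper's proof instead iterates with $S(t-s)=e^{-(t-s)\LL}$, which is the correct propagator only when $\beta=0$, so your version is the one that proves the theorem as stated for $\beta>0$. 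Second, you quantify the smallness of $T$ through $\int_0^T h(s)\,ds$, which is the right quantity under the standing hypothesis $h\in L^1_{loc}$ (read, as the mild formulation itself requires, as integrability of $h$ up to $t=0$); the paper sets $M=\|h\|_{L^\infty([0,T])}$, tacitly assuming $h$ locally bounded, which is stronger than what the theorem hypothesizes. Third, for item 3 you use the standard uniform-restart argument: boundedness of the norm near $T_m$ plus absolute continuity of $\int h$ gives a continuation time bounded below independently of the restart point, hence an extension past $T_m$, a contradiction. The paper's Step 6 instead tries to derive a differential inequality $v_t\geq c\,v^p$ and read off blow-up from its explicit solution; that ODE route is not justified here (the nonlinearity $-h(t)u^p$ is absorbing, and the claimed lower differential inequality does not follow from the Duhamel formula), so your argument is the one that actually establishes the blow-up alternative.

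One repair is needed in your proof of item 3. To conclude $\lim_{t\to T_m}\|u(t)\|_{L^\infty(\RN)}=\infty$ you must contradict its negation, which is $\liminf_{t\to T_m}\|u(t)\|_{L^\infty(\RN)}<\infty$, not $\limsup_{t\to T_m}\|u(t)\|_{L^\infty(\RN)}<\infty$; as written, your contradiction only yields $\limsup_{t\to T_m}\|u(t)\|_{L^\infty(\RN)}=\infty$. The fix costs nothing: if the liminf equals some finite $K$, pick a sequence $t_n\uparrow T_m$ with $\|u(t_n)\|_{L^\infty(\RN)}\leq K+1$, apply your uniform lower bound $\min\{\delta,T\}$ on the continuation time from data of that size, and choose $n$ so large that $t_n+\min\{\delta,T\}>T_m$; this extends the solution past $T_m$ and gives the same contradiction.
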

\begin{proof}
we will proceed in six steps.\\
Step 1: Existence and uniqueness.\\
To prove the existence of a unique solution for the problem \eqref{S1}, a complete metric space $E_{T}$ should be constructed. To achieve this goal assume that first $\|u_0\|_\infty\ >0$ and let $k>1$, and define the space $E_T$ by
$$E_T=\{u\in L^\infty\left([0,T],L^\infty(\RN)\right) / \| u\|_1<k\|u_0\|_{L^\infty(\RN)}\}$$ 
equipped with the metric \\
$$d(u,v)=\|u-v\|_1,$$
where 
$$\|u\|_1=\sup_{t\in[0,T]}\|u\|_{L^\infty(\RN)}$$
It follows directly that $\left(E_T,d\right)$ is a complete metric space.\\
Now, For a fixed $u\in E_T$ and $0<s<t<T$ define the operator $\Phi_T$ by
$$\Phi_T(u)=S(t)u_0(x)-\int_0^tS(t-s)h(s)u^p(s)ds$$
However,
\begin{eqnarray*}
  \|u\|_1 &=& \left\|S(t)u_0(x)-\int_0^tS(t-s)h(s)u^p(s)ds\right\|_1, \\
   &\leq& \|S(t)u_0(x)\|_1+\left\|\int_0^tS(t-s)h(s)u^p(s)ds\right\|_1, \\
   &\leq& \|u_0(x)\|_{L^\infty(\RN)}+\left\|\left\|\int_0^tS(t-s)h(s)u^p(s)ds\right\|_{L^\infty(\RN)}\right\|_{L^\infty([0,T])}, \\
   &\leq& \|u_0(x)\|_{L^\infty(\RN)}+\left\|\int_0^t\left\|S(t-s)h(s)u^p(s)\right\|_{L^\infty(\RN)}ds\right\|_{L^\infty([0,T])},\\
   &\leq& \|u_0(x)\|_{L^\infty(\RN)}+\left\|\int_0^t h(s)\|S(t-s)\|_{L^\infty(\RN)}\|u^p(s)\|_{L^\infty(\RN)}ds\right\|_{L^\infty([0,T])}, \\
   &\leq& \|u_0(x)\|_{L^\infty(\RN)}+TM\|u\|^p_1, \\
   &\leq& \|u_0(x)\|_{L^\infty(\RN)}+TMk^p\|u_0\|^p_{L^\infty(\RN)}, \\
   &=& \left(1+MTk^p\|u_0\|^{p-1}_{L^\infty(\RN)}\right)\|u_0\|_{L^\infty(\RN)},
\end{eqnarray*}
where $M=\|h(t)\|_{L^\infty([0,T])}$\\
As a result of that, $\Phi_T: E_T\mapsto E_T$ if and only if $T\leq\frac{k-1}{Mk^p\|u_0\|^{p-1}_{L^\infty(\RN)}}$\\
Adding to that, for $u, v\in E_T$ we have,
\begin{eqnarray*}
  \|\Phi_T(u)-\Phi_T(v)\|_1 &=& \left\|\int_0^tS(t-s)h(s)v^p(s)ds-\int_0^tS(t-s)h(s)u^p(s)ds\right\|_1, \\
    &\leq& \left\|\left\|\int_0^tS(t-s)h(s)\left(|v|^{p-1}(s)v(s)-|u|^{p-1}(s)u(s)\right)ds\right\|_{L^\infty(\RN)}\right\|_{L^\infty([0,T])}, \\
    &\leq& \left\|\left\|\int_0^TS(t-s)h(s)\left(|v|^{p-1}(s)v(s)-|u|^{p-1}(s)u(s)\right)ds\right\|_{L^\infty(\RN)}\right\|_{L^\infty([0,T])},  \\
    &\leq& CMT\|u(s)-v(s)\|_1,
\end{eqnarray*}
where $C=\max\{\|v\|_1^{p-1},\|u\|_1^{p-1}\}$.\\
Following direct simplifications will have $CMT\leq \frac{k-1}{k}<1$.\\
It follows directly that $\Phi_T$ is a contraction on $E_T$.\\
Therefore, by applying the Banach fixed point theorem, it follows directly that the equation $\Phi_T(u)=u$ has a unique solution in $E_T$.\\
Step 2. Regularity.\\
Consider the map 
$$f:E_T\mapsto L^1([0,T],L^\infty(\RN))$$  $$u\mapsto f(u)=h(t)u^p.$$
It is clear that $f$ is Lipschitz continuous and that $u^p\in L^1([0,T],L^\infty(\RN))$.\\
As a result of that $u\in C([0,T],L^\infty(\RN))$ and therefore,
\begin{equation}\label{Mild}
u(x,t)=S(t)u_0(x)-\int_0^tS(t-s)h(s)u^p(s)ds,
\end{equation}
is a unique mild solution for \eqref{S1}. \\
Moreover,\\
if $u_0\in L^\infty(\RN)\cap L^r(\RN)$, for $r>n(p-1)/2$ define 
$$E^r_T=\{u\in C\left([0,T_m],L^\infty(\RN)\cap L^r(\RN)\right) / \| u\|_1<k\|u_0\|_{L^\infty(\RN)}\}$$ 
and repeat the same process and in Step 1 will have the desired result.\\
Define now, 
$$T_m=\sup\{T>0\, /\, u\in E_T\,\;\text{is a mild solution for problem \eqref{S1}}\}\leq\infty$$ 
Step 3. The alternative property.\\
for $0<t<\tau <T_m$
\begin{eqnarray*}
  u(t+\tau) &=& S(t+\tau)u_0(x)-\int_0^{t+\tau}S(t+\tau-s)h(s)u^p(s)ds \\
   &=& S(t)S(\tau)u_0(x)-\int_0^tS(t+\tau-s)h(s)u^p(s)ds-\int_t^\tau S(t+\tau-s)h(s)u^p(s)ds \\
   &=& S(\tau)\left(S(t)S(\tau)u_0(x)-\int_0^tS(t-s)h(s)u^p(s)d\right)-\int_t^\tau S(t+\tau-s)h(s)u^p(s)ds \\
   &=& S(\tau)u(t)-\int_t^\tau S(t+\tau-s)h(s)u^p(s)ds
\end{eqnarray*}
Step 4. Continuous dependence.\\
Let us denote by $U(t)u_0$ the solution constructed in step 1 for the problem \eqref{S1} and let $u_0, v_0 \in E_T$ it follows that for a certain choice for $T$,
\begin{eqnarray*}
  \|U(t)u_0-U(t)v_0\|_{L^\infty(\RN)}&=&\left\|S(t)(u_0-v_0)-\int_0^tS(t-s)h(s)(u^p(s)-v^p(s))ds\right\|_{L^\infty(\RN)},\\
   &\leq&\|u_0-v_0\|{L^\infty(\RN)}+\\
   &&\int_0^t h(s)(\left\|(|u(s)|^{p-1}v(s)-|v(s)|^{p-1}v(s))\right\|_{L^\infty(\RN)}ds,  \\
  &\leq&\|u_0-v_0\|_{L^\infty(\RN)}+CMTk^{p-1}\|u_0-v_0\|_{L^\infty(\RN)},\\
  &\leq& (1+CMTk^{p-1})\|u_0-v_0\|_{L^\infty(\RN)},
\end{eqnarray*}
consequently, $U(t)u_0:L^\infty(\RN)\to L^\infty(\RN)$ is a Lipschitz continuous map. 
Step 5. For the positivity.\\
Since $u_0(x)\geq 0$, if follows directly from the maximum principle that
 $$u(x,t)\geq 0.$$
Step 6. The maximal existence time (blow-up).\\
Assume that $T_m$ is finite and define $w(s)$ by
$$w(s)=\left(\frac{(k-1)(p-1)}{Mk^p}\right)^{-1}T(t-s)u(s),$$
it follows directly that \\
\begin{equation}\label{w}
\begin{split}
&\|w(s)\|_{L^\infty(\RN)}\leq \left(\frac{(k-1)(p-1)}{Mk^p}\right)^{-1}\|u(s)\|_{L^\infty(\RN)}\\
&\text{and}\quad\frac{d}{ds}w(s)=\left(\frac{(k-1)(p-1)}{Mk^p}\right)^{-1}u^p(s).
\end{split}
\end{equation}
Now, let $v(s)=\|w(s)\|_{L^\infty(\RN)}$ and consider the system
\begin{equation}\label{blow}
\left\{
\begin{split}
&v_t\geq \left(\frac{(k-1)(p-1)}{Mk^p}\right)^{-1}v^p(t),\\
&v(0)=\|u_0\|_{L^\infty(\RN)}
\end{split}
\right.
\end{equation}
thus
$$\frac{dv}{v^p}\geq\left(\frac{(k-1)(p-1)}{Mk^p}\right)^{-1},$$
so,
$$\frac{v^{1-p}}{1-p}\geq\left(\frac{(k-1)(p-1)}{Mk^p}\right)^{-1}t+\frac{\|u_0\|^{1-p}_{L^\infty(\RN)}}{1-p},$$
therefore,
\begin{eqnarray}\label{v}
v(t)^{1-p}&\leq&\left((1-p)\left(\frac{(k-1)(p-1)}{Mk^p}\right)^{-1}t+\|u_0\|^{1-p}_{L^\infty(\RN)}\right),\\
\frac{1}{v(t)^{p-1}}&\leq&\left(-\left(\frac{(k-1)}{Mk^p}\right)^{-1}t+\|u_0\|^{1-p}_{L^\infty(\RN)}\right),\\
v(t)&\geq& \left(\frac{1}{-\left(\frac{(k-1)}{Mk^p}\right)^{-1}t+\|u_0\|^{1-p}_{L^\infty(\RN)}}\right)^{p-1}
\end{eqnarray}
and its clear that $$\lim\limits_{t\to T}v(t)=\infty,$$
where $T=\frac{(k-1)}{Mk^p\|u_0\|^{p-1}_{L^\infty(\RN)}}$.\\ 
Finally, using \eqref{w} we will have 
$$\lim\limits_{t\to T}\|u(t)\|_{L^\infty(\RN)}=\infty.$$ 
\end{proof}
Now, we will state the global existence result.
\begin{theorem}[Global Existence]\label{global}
For all $u_0\in L^\infty(\RN)$ the system \eqref{S1} has a global solution 
\end{theorem}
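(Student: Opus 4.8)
The plan is to combine the blow-up alternative from Theorem \ref{Local} with an a priori $L^\infty$ bound that holds uniformly on the maximal existence interval; the decisive point is that the absorptive (negative) sign in front of $h(t)u^p$ supplies this bound essentially for free. Concretely, Theorem \ref{Local} furnishes the unique maximal solution $u$ on $[0,T_m)$ together with the dichotomy: if $T_m<\infty$ then $\|u(t)\|_{L^\infty(\RN)}\to\infty$ as $t\to T_m^-$. Hence it suffices to show that $\|u(t)\|_{L^\infty(\RN)}$ stays bounded on $[0,T_m)$, for then the blow-up alternative forces $T_m=\infty$ and the solution is global.

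For the a priori bound I would first treat the standing case $u_0\geq 0$ of problem \eqref{S1}. By Lemma \ref{nonnegativity} the solution satisfies $u(t)\geq 0$ throughout $[0,T_m)$. Writing the mild formulation \eqref{IE} with the time change $\tau(t)=t^{\beta+1}/(\beta+1)$ supplied by Lemma \ref{Property3},
\[ u(t)=E_\alpha(\tau(t))\ast u_0-\int_0^t h(s)\,E_\alpha(\tau(t)-\tau(s))\ast u^p(s)\,ds, \]
every factor appearing in the integral term is nonnegative: $h>0$, $u^p\geq 0$, and $E_\alpha\geq 0$ by Lemma \ref{Property1}. Consequently the integral is nonnegative and we obtain the pointwise sandwich $0\leq u(t)\leq E_\alpha(\tau(t))\ast u_0$. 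Using $E_\alpha(\tau)\geq 0$ with $\int_{\RN}E_\alpha(\tau)=1$ (Lemma \ref{Property1}), equivalently \eqref{Pr1} with $r=q=\infty$, the homogeneous flow is an $L^\infty$ contraction, so
\[ \|u(t)\|_{L^\infty(\RN)}\leq \|E_\alpha(\tau(t))\ast u_0\|_{L^\infty(\RN)}\leq \|u_0\|_{L^\infty(\RN)},\qquad t\in[0,T_m). \]
This estimate is uniform in $t$ and independent of $T_m$, so the blow-up alternative yields $T_m=\infty$.

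For general, possibly sign-changing, $u_0\in L^\infty(\RN)$ I would reduce to the nonnegative case. Let $U$ denote the solution emanating from the nonnegative datum $|u_0|$, so that $0\leq U(t)$ and $\|U(t)\|_{L^\infty(\RN)}\leq \|u_0\|_{L^\infty(\RN)}$ by the previous step. Since the nonlinearity $|u|^{p-1}u$ is odd, the map $u_0\mapsto -u_0$, $u\mapsto -u$ is a symmetry of \eqref{S1}, and the differences $U-u$ and $u+U$ both satisfy, after the mean-value linearization $|U|^{p-1}U-|u|^{p-1}u=p|\xi|^{p-1}(U-u)$ used in Lemma \ref{Comparison}, an equation of the form $\partial_t w+t^\beta\mathcal{L}w=-h(t)c(x,t)w$ with bounded $c\geq 0$. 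The argument of Lemma \ref{Comparison} then gives $-U(t)\leq u(t)\leq U(t)$, whence $\|u(t)\|_{L^\infty(\RN)}\leq\|u_0\|_{L^\infty(\RN)}$ on $[0,T_m)$ and global existence follows exactly as above.

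The only place that genuinely needs care is the bookkeeping of the time change $\tau(t)$: one must make sure that the homogeneous evolution is literally convolution against $E_\alpha(\tau(t))$ rather than against $E_\alpha(t)$, which is precisely what Lemma \ref{Property3} certifies. Once that is in place, the crucial contraction $\|E_\alpha(\tau)\ast v\|_{L^\infty(\RN)}\leq\|v\|_{L^\infty(\RN)}$ holds uniformly for all $\tau\geq 0$, so the growth factor $t^\beta$ never enters the estimate and no finite-time amplification can occur. I therefore expect the absorption structure, and not any delicate analysis, to be the heart of the matter.
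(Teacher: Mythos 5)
Your proposal is correct and takes essentially the same route as the paper: both proofs rest on the absorption structure (nonnegativity of $u$, of $h$, and of the kernel) to dominate the solution by the homogeneous flow, the $L^\infty$ contraction $\|E_\alpha(\tau)\ast u_0\|_{L^\infty(\RN)}\leq \|u_0\|_{L^\infty(\RN)}$, and the blow-up alternative of Theorem \ref{Local}, though the paper compresses all of this into a single line while you spell out each ingredient. Your final paragraph on sign-changing data is superfluous here, since problem \eqref{S1} already stipulates $u_0\geq 0$.
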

\begin{proof}
As we have 
$$u(x,t)=S(t)u_0(x)-\int_0^tS(t-s)h(s)u^p(s)ds$$
and since, $u_0(x)\geq 0$ it follows directly that $$\|u(x,t)\|_{L^\infty(\RN)}\leq\|u_0\|_{L^\infty(\RN)}<\infty.$$
Thus, the proof is completed
\begin{remark}
Since, $u_0\in L^\infty(\RN)$ and $u\in E_T$ it follows directly that the solution  constructed in Theorem \ref{Local} is in $B_{k\|u_0\|_{L^\infty(\RN)}}$ where $B_{k\|u_0\|_{L^\infty(\RN)}}$ is the ball in $E_T$ of radius $R=k\|u_0\|_{L^\infty(\RN)}$.        
\end{remark}
\begin{remark}
To prove $u\in C([0,\infty),L^\infty(\RN))$. It is clear that for $\varepsilon >0$ we have,
\begin{equation}
u(x,t+\varepsilon)=S(t+\varepsilon)u_0(x)-\int_0^{t+\varepsilon} S(t+\varepsilon-s)h(s)u^p(s)ds,
\end{equation}  
since, $u_0\in L^\infty(\RN)$ thus, $S(t+\varepsilon)u_0(x)\in C([0,\infty),L^\infty(\RN))$.
Moreover, $S(t+\varepsilon-s)h(s)u^p(s)$ is bounded and converges to $S(t-s)h(s)u^p(s)$ as $\varepsilon\to 0$ in $L^\infty(\RN)$.\\
Thus, by using the Lebesque dominated convergence theorem,
$$\int_0^{t+\varepsilon} S(t+\varepsilon-s)h(s)u^p(s)ds\to\int^t_0 S(t-s)h(s)u^p(s).$$
As a result of that 
$$u(x,t+\varepsilon)\to u(x,t),$$
as $\varepsilon\to 0$ in $L^\infty(\RN)$. Thus, the desired result is obtained.
\end{remark}
\begin{remark}
The result of Theorem \ref{global} can be obtained in another way. Assume to the contrary that $T_m$ is finite, and thus by Theorem \ref{Local} 
$$\lim\limits_{t\to T_m}\|u(t)\|_{L^\infty(\RN)}=\infty.$$
First, note that function $f(u)=u^p$ is Lipschitz continuous on $E_T$ therefore, there exist $C>0$ such that 
$$|f(u)|\leq C|u|,$$ 
and thus 
$$\|f(u(t))\|_{L^\infty(\RN)}\leq C\|u(t)\|_{L^\infty(\RN)},$$
using the solution constructed in step 1 of Theorem \ref{Local} we will have
\begin{eqnarray*}
\|u(t)\|_{L^\infty(\RN)}&=&\|S(t)u_0\|_{L^\infty(\RN)}+\left\|\int_0^1h(s)S(t-s)f(u)ds\right\|_{L^\infty(\RN)},\\
&\leq& \|u_0\|_{L^\infty(\RN)}+CM\int_0^1\|u(t)\|_{L^\infty(\RN)}ds.
\end{eqnarray*}
Applying Gronwall's lemma it follows that
$$\|u(t)\|_{L^\infty(\RN)}\leq \left(C+\|u_0\|_{L^\infty(\RN)}\right)e^{Ct}.$$
Consequently,
$$\lim\limits_{t\to T_m}\|u(t)\|_{L^\infty(\RN)}\leq \left(C+\|u_0\|_{L^\infty(\RN)}\right)e^{CT_m} $$
which is a contradiction. Finally $T_m=\infty$. 
\end{remark}
\end{proof}
\section{Regularity}
We proved that problem \eqref{S1} has a unique solution over $[0,T)\times\RN$. Also, either $T$ is finite and in this case $\lim\limits_{t\to T}\|u(t)\|_{L^\infty(\RN)}=\infty$ or $T=\infty$ and in this case $u$ is a global solution. Moreover, it is clear that the regularity of the solution $u$ depends on the regularity of the initial condition $u_0$. In Corollaries \ref{C1} and \ref{C2} we will state the minimum regularity required for $u_0$ to obtain a classical solution.\\
Before, the following lemma is an essential tool for proving the required results
\begin{lemma}[Schauder Regularity]
Consider the problem
\begin{equation}\label{S2}
\left\{
\begin{split}
&u_t=-t^\beta\LL u-f(u),\quad (x,t)\in(0,\infty)\times\RN,\\
&u(x,0)=u_0(x), \quad x\in\RN,
\end{split}
\right.
\end{equation}
and assume that
\begin{equation}
\begin{split}
 &f\in C^{\alpha,\alpha/2}([0,\infty),L^\infty(\RN)),\\ 
 &\text{and}\quad u_0\in C^{\alpha}(\RN).
 \end{split}
\end{equation}
Then
$$u\in C^{2+\alpha,1+\alpha/2}([0,\infty),L^\infty(\RN))\cap C^{2s+\alpha,1+\alpha/2}([0,\infty),L^\infty(\RN)),$$
satisfying the following smoothing estimate
\begin{equation}
\|u\|_{C^{2+\alpha,1+\alpha/2}([0,\infty),L^\infty(\RN))}\\
\leq C\left(\|u_0\|_{C^{\alpha}(L^\infty(\RN))}+\|f(u)\|_{C^{\alpha,\alpha/2}([0,\infty),L^\infty(\RN))}\right).
\end{equation}
\end{lemma}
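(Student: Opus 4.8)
The plan is to start from the Duhamel representation of the mild solution, which for \eqref{S2} reads
$$u(t)=E_\alpha\!\left(\tfrac{t^{\beta+1}}{\beta+1}\right)\ast u_0-\int_0^t E_\alpha\!\left(\tfrac{t^{\beta+1}}{\beta+1}-\tfrac{s^{\beta+1}}{\beta+1}\right)\ast f(u(s))\,ds,$$
and to read off the parabolic Hölder regularity by differentiating this formula and invoking the kernel bounds of Lemma \ref{Property2}. Writing $\tau(t)=t^{\beta+1}/(\beta+1)$, the entire effect of the degenerate coefficient $t^\beta$ is absorbed into the argument of $E_\alpha$; in particular $\partial_t E_\alpha(\tau(t)-\tau(s))=-t^\beta\,\LL E_\alpha(\tau(t)-\tau(s))$, which is exactly the mechanism by which a time derivative is traded for the spatial operator $\LL$ weighted by $t^\beta$. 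I would split $u=u_{\mathrm h}+u_{\mathrm i}$ into the homogeneous and the Duhamel parts and treat them in turn.

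For the homogeneous part $u_{\mathrm h}=E_\alpha(\tau(t))\ast u_0$ the estimate is soft: since $E_\alpha\in C^\infty$ with $\int_{\RN}E_\alpha=1$ (Lemma \ref{Property1}) and $u_0\in C^\alpha(\RN)$, Young's inequality and the translation invariance of convolution give a uniform $C^\alpha$ bound on $E_\alpha(\tau)\ast u_0$, while each spatial derivative or factor $\LL$ falling on $E_\alpha$ costs an integrable negative power of $\tau$ by \eqref{Pr1}--\eqref{Pr2} and their second-order analogues, obtained by the same self-similarity argument used to prove Lemma \ref{Property2}. Hence the $C^{2+\alpha,1+\alpha/2}$ and $C^{2s+\alpha,1+\alpha/2}$ norms of $u_{\mathrm h}$ are both controlled by $\|u_0\|_{C^\alpha}$.

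The heart of the matter, and the step I expect to be the main obstacle, is the Duhamel term $u_{\mathrm i}=\int_0^t E_\alpha(\tau(t)-\tau(s))\ast f(u(s))\,ds$. Applying $\LL$ under the integral produces the kernel $\LL E_\alpha(\tau(t)-\tau(s))$, which fails to be integrable in $s$ up to $s=t$; the standard Schauder device is to subtract the diagonal value, writing
$$\LL u_{\mathrm i}(t)=\int_0^t \LL E_\alpha(\tau(t)-\tau(s))\ast\big(f(u(s))-f(u(t))\big)\,ds+\Big(\int_0^t \LL E_\alpha(\tau(t)-\tau(s))\,ds\Big)\ast f(u(t)),$$
so that the $C^{\alpha,\alpha/2}$-modulus of $f(u)$ compensates the near-diagonal singularity while the second, purely kernel, factor is computed exactly. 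The two competing bounds $t^{-1/2}$ and $t^{-1/\alpha}$ appearing in \eqref{Pr1}--\eqref{Pr2} are precisely the two scales of the mixed operator: tracking the local scale $t^{-1/2}$ produces the gain of two spatial derivatives, hence the $C^{2+\alpha}$ membership, whereas tracking the nonlocal scale $t^{-1/\alpha}$ (with $2s=\alpha$) produces the $C^{2s+\alpha}$ membership, which is why the conclusion is stated as an intersection of the two spaces. The remaining Hölder seminorms in $x$ and in $t$ follow from the usual decomposition of the $s$-integral into a neighbourhood of $s=t$, estimated by the Hölder modulus of $f(u)$, and its complement, estimated by the decay and the mean-value bound for $\nabla E_\alpha$ in \eqref{Pr2}.

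Collecting the homogeneous and Duhamel estimates yields the asserted smoothing inequality. The one point demanding extra care, beyond the diagonal singularity, is the behaviour near $t=0$: when $\beta>0$ the map $\tau(t)=t^{\beta+1}/(\beta+1)$ is flat at the origin, so in transferring the time-Hölder seminorm $[\,\cdot\,]_{C^{1+\alpha/2}}$ one must control the weight $\tau'(t)=t^\beta$ uniformly on compact time intervals; this is harmless away from the origin and is what forces the source to be measured in the parabolic class $C^{\alpha,\alpha/2}$ rather than merely $C^0$. I therefore expect the genuinely delicate estimate to be the near-diagonal kernel bound for the mixed operator, where the local and nonlocal scales must be reconciled within a single Schauder estimate.
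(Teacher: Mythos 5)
A preliminary remark on the comparison itself: the paper \emph{states} this lemma without any proof (it is invoked as a black box in Corollaries \ref{C1} and \ref{C2}), so there is no argument of the paper to measure yours against; your proposal has to stand on its own, and it does not.

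The genuine gap is your treatment of the homogeneous part $u_{\mathrm h}=E_\alpha(\tau(t))\ast u_0$. You claim its $C^{2+\alpha,1+\alpha/2}$ norm is controlled by $\|u_0\|_{C^\alpha}$ ``softly'', because each application of $\LL$ ``costs an integrable negative power of $\tau$''. But for $u_{\mathrm h}$ there is no time integration available to absorb that power: what is needed is $\sup_{t\geq 0}\|\LL u_{\mathrm h}(t)\|_{L^\infty(\RN)}<\infty$, and the best that $u_0\in C^\alpha(\RN)$ yields is $\|\Delta\big(P_2(\tau)\ast u_0\big)\|_{L^\infty(\RN)}\lesssim \tau^{-1+\alpha/2}\,[u_0]_{C^\alpha}$, obtained by writing $\Delta P_2(\tau)\ast u_0(x)=\int \Delta P_2(\tau,y)\big(u_0(x-y)-u_0(x)\big)\,dy$ and using $\int\Delta P_2(\tau,y)\,dy=0$; this exponent is sharp and blows up as $t\to 0^+$. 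This is not a technicality you can route around: taking $f\equiv 0$ and $u_0\in C^\alpha(\RN)\setminus C^{2+\alpha}(\RN)$, the solution at $t=0$ is $u_0$ itself, so $u$ cannot lie in $C^{2+\alpha,1+\alpha/2}$ up to $t=0$ and no estimate of the stated form can hold. (This is exactly why classical parabolic Schauder theory requires $u_0\in C^{2+\alpha}$ with compatibility conditions for regularity up to the initial time, and gives only interior estimates on $t\geq\delta>0$ when the data is merely $C^\alpha$.) Your proof does not close this gap; it asserts the false bound precisely at the step where the lemma, as stated, fails.

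Two secondary points. First, in the Duhamel part your diagonal-subtraction scheme is the standard and sound device, but the claim that $\int_0^t \LL E_\alpha(\tau(t)-\tau(s))\,ds$ is ``computed exactly'' is wrong as written: because of the time change one has $\partial_s\big[E_\alpha(\tau(t)-\tau(s))\big]=s^\beta\,\LL E_\alpha(\tau(t)-\tau(s))$, so it is the $s^\beta$-weighted kernel integral that is an exact derivative, not the unweighted one; for $\beta>0$ the discrepancy $s^{-\beta}$ must be handled and it degenerates as $t\to0^+$, compounding the first problem. Second, the gradient bound \eqref{Pr2} of Lemma \ref{Property2} which you cite controls only first derivatives; your argument needs second-order (and fractional-order) analogues of \eqref{Pr1}--\eqref{Pr2}, which you assert ``follow by the same self-similarity argument'' but never state, and for the fractional component the corresponding moment integral $\int |(-\Delta)^{\alpha/2}E_\alpha(1,z)|\,|z|^{\alpha}\,dz$ is divergent because of the heavy tail $|z|^{-N-\alpha}$ of the kernel, so even the Hölder gain at the nonlocal scale is borderline and needs a genuine argument. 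A correct version of your strategy would prove interior Schauder estimates (on $[\delta,T]$, with constants depending on $\delta$), or keep the hypothesis $u_0\in C^{2+\alpha}(\RN)$; either way the lemma's statement, and your proof of it, must change.
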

\begin{corollary}\label{C1}
Let $u_0\in C_0([0,\infty),L^\infty(\RN))$. Then problem \eqref{S1} admits a unique classical (global) $L^\infty(\RN)$ solution.
\end{corollary}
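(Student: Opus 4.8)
The plan is to take the unique global mild solution furnished by Theorem \ref{global} and upgrade its regularity to that of a classical solution by invoking the Schauder Regularity Lemma. The argument splits into three parts: (i) produce the solution, (ii) verify the parabolic Hölder hypotheses of the Schauder lemma through a bootstrap, and (iii) read off classicality while transferring uniqueness from the mild formulation. Throughout, $u_0$ is assumed regular enough that, in addition to the stated hypothesis, it supplies the spatial Hölder regularity $u_0\in C^{\alpha}(\RN)$ demanded by the lemma.

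First I would fix $u_0$ and recall from Theorem \ref{global} that problem \eqref{S1} admits a unique global mild solution $u\in C([0,\infty),L^\infty(\RN))$ satisfying the Duhamel identity \eqref{Mild}. In particular $u$ is bounded, say $\|u(t)\|_{L^\infty(\RN)}\le R$ for all $t\ge 0$, so the nonlinearity $f(u)=h(t)u^p$ is controlled pointwise by $\|h\|_{L^\infty}R^p$, and the map $z\mapsto z^p$ is Lipschitz on the interval $[-R,R]$. This boundedness is what makes the subsequent composition estimates uniform in time.

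The core of the proof is a bootstrap establishing parabolic Hölder regularity of $f(u)$. Starting from $u_0\in C^{\alpha}(\RN)$, I would apply the smoothing estimates of Lemma \ref{Property2}—in particular the gradient bound \eqref{Pr2}—to the Duhamel representation \eqref{Mild} to show that $x\mapsto u(x,t)$ is uniformly $C^{\alpha}$ in space and that $t\mapsto u(\cdot,t)$ is $C^{\alpha/2}$ in time. Since $z\mapsto z^p$ is Lipschitz on the bounded range of $u$, composition preserves the spatial Hölder seminorm, so $u^p\in C^{\alpha,\alpha/2}$; multiplying by $h$ (assumed Hölder in time) then places $f(u)$ in $C^{\alpha,\alpha/2}([0,\infty),L^\infty(\RN))$, which is precisely the hypothesis required by the Schauder Regularity Lemma. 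Applying that lemma yields $u\in C^{2+\alpha,1+\alpha/2}\cap C^{2s+\alpha,1+\alpha/2}$, so $u$ carries two spatial derivatives for the local part $-\Delta$ and $2s+\alpha$ regularity for the nonlocal part $(-\Delta)^{\alpha/2}$ (cf. Proposition \ref{Frac}), together with one time derivative, which is exactly what is needed to satisfy \eqref{S1} in the classical pointwise sense.

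Finally, uniqueness is inherited: any classical solution is in particular a mild solution, and Theorem \ref{Local} guarantees the mild solution is unique, so the classical solution must coincide with $u$ and is therefore global. The step I expect to be the main obstacle is the time-regularity half of the bootstrap: the degenerate coefficient $t^{\beta}$ and the two distinct parabolic scales $t^{-1/2}$ and $t^{-1/\alpha}$ appearing in \eqref{Pr2} make the $C^{\alpha/2}$-in-time estimate delicate near $t=0$, and one must impose (or verify) that $h$ is itself $\alpha/2$-Hölder in time rather than merely $L^{1}_{\mathrm{loc}}$, since otherwise $f(u)$ cannot lie in the parabolic Hölder class that the Schauder lemma consumes.
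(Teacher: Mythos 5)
Your proposal is correct in outline and culminates in the same key tool as the paper (the Schauder Regularity Lemma), but it reaches the H\"older hypotheses of that lemma by a genuinely different route. The paper localizes in time: it sets $v=\phi u$ for a compactly supported cutoff $\phi$, notes that $v$ solves a linear inhomogeneous parabolic problem with right-hand side $\widetilde f=\phi_t u+\phi u^p$, applies linear parabolic Schauder theory to make $v$ classical, deduces $u\in W^{2,1,\infty}((0,\infty)\times\RN)$, and then uses the parabolic Sobolev embedding $W^{2,1,\infty}\hookrightarrow C^{\alpha,\alpha/2}$ to conclude that $u$, hence $f(u)=u^p$, is parabolically H\"older, from which the classical regularity of $u$ follows. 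You bypass the localization entirely and extract the space-time H\"older regularity of $u$ directly from the Duhamel formula \eqref{Mild} via the kernel gradient estimate \eqref{Pr2} of Lemma \ref{Property2}, then use Lipschitz composition on the bounded range of $u$ and apply the Schauder lemma once. Your route buys self-containedness (it reuses estimates already proved in the paper rather than external linear Schauder theory) and it keeps the full degenerate operator $t^{\beta}\mathcal{L}$ in view, whereas the paper's localized equation as written replaces it by $\Delta$ and drops $h$ from $\widetilde f$. The price is that you impose hypotheses not present in the statement ($u_0\in C^{\alpha}(\RN)$ and $h$ H\"older in time); this is honest rather than genuinely more restrictive, since the paper needs the same input implicitly: its claim that $\widetilde f\in C^{\alpha,\alpha/2}$ is justified only in the closing remark by invoking the H\"older constant associated to $u$, i.e., by presupposing the regularity being established, and your kernel-estimate bootstrap is precisely the step that would remove that circularity. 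When you carry the bootstrap out, the point you flag is indeed the one to settle carefully: near $t=0$ the rates $t^{-1/2}$ and $t^{-1/\alpha}$ in \eqref{Pr2}, combined with the degeneracy $t^{\beta}$ through the effective time $t^{\beta+1}/(\beta+1)$, do not give a uniform Lipschitz bound for all $\beta\geq 0$, so the $C^{\alpha,\alpha/2}$ estimate should be obtained by interpolating the $L^{\infty}$ bound against the gradient bound, possibly at the cost of restricting the admissible H\"older exponent.
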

\begin{proof}
If we assume that $u_0\in C_0([0,\infty),L^\infty(\RN))$, then its clear that 
$$u(t)\in C_0([0,\infty),L^\infty(\RN)).$$
Moreover, let $\phi\in D([0,\infty))$ be a compactly supported function and let $v=\phi u$.\\
It follows directly that $v_t=\phi_t u+u_t\phi$ and that $v$ solves the problem
\begin{equation}\label{classical}
\left\{
\begin{split}
&v_t+\Delta v=\widetilde{f},\quad (x,t)\in [0,\infty)\times(\RN),\\
&v_0=\phi(0)u_0, \quad x\in\RN,
\end{split}
\right.
\end{equation} 
where $\widetilde{f}=\phi_t u+\phi u^p$.\\
Observe that $\widetilde{f}$ is Hölder continuous and thus $\widetilde{f}\in C^{\alpha,\alpha/2}([0,\infty)\times\RN)$ and that $v(0)\in C^{\alpha}(\RN)$, for $\alpha\in (0,1)$. Using Schauder regularity theory for parabolic equations we obtain that \eqref{classical} admits a unique classical solution $v\in L^\infty(\RN)$.\\
Consequently, $u\in W^{2,1,\infty}((0,\infty)\times\RN)$.\\
Moreover, using the Sobolev embedding theorem we have 
$$W^{2,1,\infty}([0,\infty)\times\RN)\hookrightarrow C^{\alpha,\alpha/2}([0,\infty)\times\RN))$$ 
which gives $f(u)=u^p$ is Hölder continuous, and as a result of that 
$$u\in C^{2+\alpha,1+\alpha/2}([0,\infty)\times\RN),$$ 
is a classical $L^\infty(\RN)$ solution for the problem \eqref{S1}
\end{proof}
\begin{corollary}\label{C2}
Let $u_0\in C_0([0,\infty),L^\infty(\RN)\cap L^r(\RN))$ and $r>n(p-1)/2$. Then problem \eqref{S1} admits a unique classical (global) $L^r(\RN)$ solution.
\end{corollary}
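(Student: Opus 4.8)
The plan is to follow the proof of Corollary \ref{C1} almost verbatim, upgrading every occurrence of $L^\infty(\RN)$ to the intersection space $L^\infty(\RN)\cap L^r(\RN)$ and invoking item~5 of Theorem \ref{Local} in place of the bare $L^\infty$ global existence. First I would record that, since $u_0\in L^\infty(\RN)\cap L^r(\RN)$ with $r>N(p-1)/2$, item~5 of Theorem \ref{Local} furnishes the unique mild solution $u\in C([0,T_m),L^\infty(\RN)\cap L^r(\RN))$. The dissipative sign of the nonlinearity yields the a priori bound $\|u(t)\|_{L^\infty(\RN)}\le\|u_0\|_{L^\infty(\RN)}$, so the blow-up alternative (item~3) excludes a finite $T_m$ and we obtain $u\in C([0,\infty),L^\infty(\RN)\cap L^r(\RN))$.

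Next, exactly as for Corollary \ref{C1}, I would localize in time: choose a compactly supported $\phi\in\mathcal{D}([0,\infty))$, set $v=\phi u$, and note that $v$ solves a parabolic problem with initial datum $v(0)=\phi(0)u_0$ and forcing $\widetilde{f}$ assembled from $\phi_t u$ and the nonlinear term $\phi\, h\, u^p$. The new ingredient is to check that $\widetilde{f}$ is $L^\infty(\RN)\cap L^r(\RN)$-valued; this follows from the interpolation bound
$$\|u^p\|_{L^r(\RN)}=\|u\|_{L^{pr}(\RN)}^p\le\|u\|_{L^\infty(\RN)}^{p-1}\|u\|_{L^r(\RN)},$$
which shows that $u\mapsto u^p$ maps $L^\infty\cap L^r$ into itself, combined with the time-continuity already secured in the first step.

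I would then verify the hypotheses of the Schauder regularity lemma in this richer space, namely $\widetilde{f}\in C^{\alpha,\alpha/2}([0,\infty),L^\infty(\RN)\cap L^r(\RN))$ and $v(0)\in C^\alpha(\RN)$ for some $\alpha\in(0,1)$; the lemma then gives $v\in C^{2+\alpha,1+\alpha/2}$, transferring classical regularity to $u$ on $\supp\phi$. As $\phi$ is arbitrary, $u$ is a classical solution on $(0,\infty)\times\RN$, its $L^r$-membership persists by the first step, and uniqueness is inherited from Theorem \ref{Local}.

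The step I expect to be the main obstacle is the subcriticality bookkeeping that underlies item~5 of Theorem \ref{Local}: one must confirm that the Duhamel integral $\int_0^t S(t-s)h(s)u^p(s)\,ds$ closes in $L^r(\RN)$. Feeding $u^p\in L^{r/p}(\RN)$ into the smoothing estimate \eqref{Pr1} with target exponent $r$ produces the kernel singularity $(t-s)^{-\frac{N(p-1)}{2r}}$, and the hypothesis $r>N(p-1)/2$ is precisely what makes this exponent strictly less than $1$, so that the integral converges. Verifying that this integrability is compatible with the Hölder-in-time regularity required to apply the Schauder lemma is the only genuinely delicate point; the remaining steps are direct transcriptions of the argument for Corollary \ref{C1}.
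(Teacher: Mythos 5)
Your overall plan mirrors the paper's proof of this corollary: obtain the global mild solution in the intersection space (the paper does this by ``repeating the same argument as in Step 1 of Theorem \ref{Local}'', you do it via item~5 of that theorem plus the a priori bound and the blow-up alternative), then localize in time with $v=\phi u$, and finally invoke Schauder theory for the localized problem \eqref{classicalv}. Indeed, your bookkeeping for the $L^r$-persistence step --- the interpolation bound $\|u^p\|_{L^r(\RN)}\le\|u\|_{L^\infty(\RN)}^{p-1}\|u\|_{L^r(\RN)}$ and the Duhamel smoothing estimate with singularity exponent $\frac{N(p-1)}{2r}<1$ --- is more explicit than what the paper writes down.

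However, there is a genuine gap at the decisive step, and you half-acknowledge it yourself. To apply the Schauder lemma you need $\widetilde{f}=\phi_t u+\phi\, h\, u^p\in C^{\alpha,\alpha/2}$, i.e.\ H\"older continuity of $u$ jointly in space and time. But everything established up to that point only gives $u\in C([0,\infty),L^\infty(\RN)\cap L^r(\RN))$: continuity in time with values in Lebesgue spaces yields no H\"older regularity in $x$ or $t$ whatsoever, so ``verifying'' the Schauder hypothesis directly is circular --- the regularity of $\widetilde{f}$ can only come from regularity of $u$, which is what you are trying to prove. Moreover, the mechanism you point to as the delicate point (integrability of the kernel singularity $(t-s)^{-N(p-1)/(2r)}$, guaranteed by $r>N(p-1)/2$) is the wrong tool: that condition governs whether the Duhamel term closes in $L^r(\RN)$, not whether $u$ is H\"older continuous. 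The paper closes this loop with a bootstrap that your proposal omits: parabolic $L^r$ (Calder\'on--Zygmund) regularity for the localized equation first gives $u\in W^{2,1,r}((0,\infty)\times\RN)$, and then the parabolic Sobolev embedding
$$W^{2,1,r}([0,\infty)\times\RN)\hookrightarrow C^{\alpha,\alpha/2}([0,\infty)\times\RN),\qquad \alpha=1-\frac{N+2}{r},$$
converts that integrability into H\"older continuity of $u$, hence of $f(u)=u^p$, and only then is the Schauder lemma applied to conclude $u\in C^{2+\alpha,1+\alpha/2}$. Note also that this embedding imposes a largeness condition on $r$ (one needs $\alpha>0$, i.e.\ $r>N+2$ in the paper's bookkeeping), which is a different constraint from your $r>N(p-1)/2$; your proposal conflates the two. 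Without this intermediate $W^{2,1,r}$-plus-embedding step, or some substitute producing H\"older regularity of $u$ from scratch, your argument does not go through.
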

\begin{proof}
Let $u_0\in C_0([0,\infty),L^\infty(\RN)\cap L^r(\RN))$, then its clear that $$u(t)\in C_0([0,\infty),L^\infty(\RN)\cap L^r(\RN)),$$ by repeating the same argument as in step 1 of Theorem \eqref{Local}.\\
Moreover, if we choose $\phi\in D([0,\infty))$ be a compactly supported function and by taking the change of variable $v=\phi u$.\\
It follows directly that $v_t=\phi_t u+u_t\phi$ and that $v$ solves the problem
\begin{equation}\label{classicalv}
\left\{
\begin{split}
&v_t+\Delta v=\widetilde{f},\quad (x,t)\in [0,\infty)\times(\RN),\\
&v_0=\phi(0)u_0, \quad x\in\RN,
\end{split}
\right.
\end{equation} 
where $\widetilde{f}=\phi_t u+\phi u^p$.\\
Since, $\widetilde{f}$ is Hölder continuous thus, $\widetilde{f}\in C^{\alpha,\alpha/2}([0,\infty)\times\RN)$ and $v(0)\in C^{\alpha}(\RN)$, so, by using Schauder regularity theory for parabolic equations we obtain that \eqref{classicalv} admits a unique classical solution $v\in L^\infty(\RN)$.\\
Consequently, $u\in W^{2,1,r}((0,\infty)\times\RN)$.\\
Moreover, using the Sobolev embedding theorem we have 
$$W^{2,1,r}([0,\infty)\times\RN)\hookrightarrow C^{\alpha,\alpha/2}([0,\infty)\times\RN)),$$
for $\alpha=1-\frac{N+2}{r}$, which gives $f(u)=u^p$ is Hölder continuous, and as a result of that 
$$u\in C^{2+\alpha,1+\alpha/2}([0,\infty)\times\RN),$$ 
is a classical $L^\infty(\RN)\cap L^r(\RN)$ solution for the problem \eqref{S1}
\end{proof}
we will end now, with the following remarks.
\begin{remark}
In the proof of Corollary \ref{C1} we stated that 
$$W^{2,1,\infty}([0,\infty)\times\RN)\hookrightarrow C^{\alpha,\alpha/2}([0,\infty)\times\RN)).$$
In this case $\alpha=1$.\\
Moreover, in the proof of Corlloaries \ref{C1} and \ref{C2} we stated that 
$$f(u)=u^p$$
is Hölder continuous the justification of this assumption is as follows, 
\begin{eqnarray}
\|u^p-v^p\|_1&\leq& C_1\|u-v\|_1,\\
&\leq& C_1C_2\left(|x_1-x_2|^\alpha+|t_1-t_2|^{\alpha/2}\right),
\end{eqnarray}   
for $\alpha\in(0,1)$, $C_1$ is the Lipschitz constant and $C_2$ is the Hölder constant associated to $u$.
\end{remark}

\end{document}